\documentclass[a4paper,12pt]{amsart}
\usepackage{amsmath}  
\usepackage{amssymb}
\usepackage{amsthm}  
\usepackage{amscd} 
\usepackage{color}
 
\theoremstyle{plain}
\newtheorem{thm}{Theorem}[section]

\newtheorem{lemma}[thm]{Lemma}
\newtheorem{cor}[thm]{Corollary}

\theoremstyle{definition}
\newtheorem{defn}[thm]{Definition}

\theoremstyle{remark}
\newtheorem{rem}{Remark}

\usepackage{tikz-cd}

 \usepackage{dynkin-diagrams}

\title[Transforms of holomorphic maps]
{Transforms of holomorphic maps from flag manifolds into Grassmannians}
\author{Oscar Macia, Yasuyuki Nagatomo}
\address{Department of Mathematics, University of Valencia,
  C. Dr. Moliner S/N, Burjassot (46100), Valencia, SPAIN}
\email{oscar.macia@uv.es}
\address{Department of Mathematics, Meiji University, 
  Higashi-Mita, Tama-ku, Kawasaki-shi, Kanagawa 214-8571, JAPAN}
\email{yasunaga@meiji.ac.jp}
\subjclass{53C07, 53C55, 53C43, 14F05}
\date{}

\begin{document}

\maketitle

\begin{abstract}
 Building on the generalized do Carmo–Wallach theory, we construct a functorial framework for holomorphic maps from flag manifolds into Grassmannians and quadrics.
Through the direct image sheaf and its inverse, we construct Penrose-type transforms that allow both the domain and target to vary. These transforms provide  
functors between categories of full holomorphic maps satisfying the gauge condition for semi-positive homogeneous bundles. 
Within this framework, Einstein–Hermitian holomorphic maps form a natural subcategory, stable under all transforms, and the minimality of $L^2$-norm of the mean
curvature operator is preserved. 

For Grassmannian targets, the moduli of Einstein-Hermitian maps from a flag manifold are identified with $r$‑tuples of non-positive integers modulo symmetry, 
where $r$ is the rank of the group of holomorphic isometries. 
For quadric targets, we obtain a complete geometric description of the moduli space. 
The center of this moduli space corresponds to the Einstein-Hermitian map into the projective space, 
and the tower of transforms identifies all intermediate moduli spaces with the moduli of holomorphic isometric embeddings at the bottom level.  
The results yield a unified categorical and geometric description of holomorphic isometric embeddings of flag manifolds.
\end{abstract}

\section{Introduction}

In 1953, Calabi introduced the diastasis function in his study of holomorphic isometric immersions
of Kähler manifolds into complex projective space $\mathbf C \mathrm P^n,$  proving the rigidity
of such immersions \cite{Cal}. Using Calabi’s rigidity theorem, Takeuchi later classified holomorphic
isometric embeddings of flag manifolds into $\mathbf C\mathrm P^n$ \cite{takeuchi}.
When the second Betti number of the flag manifold is $s,$ the corresponding moduli space can
be identified with $\mathbf{Z}^{s}_{+}$ modulo a certain symmetry, where $\mathbf{Z}^{s}_{+}$ 
denotes the set of  $s$-tuples of positive integers.
On the other hand, using Calabi’s diastasis function, Suyama showed that the moduli space of
holomorphic isometric immersions into a quadric hypersurface of $\mathbf C\mathrm P^n$
 is given by the quotient of a certain subset of complex Euclidean space by a circle action \cite{Suy}.

A theory generalising that of do Carmo and Wallach describing harmonic maps in terms of vector bundles has been developed in
\cite{Na13}. 
This theory has been successfully applied to describe the moduli space of various classes of special maps from  families of projective varieties into 
 complex Grassmannians and quadrics. 
Peng-Xu classify $SU(2)$-equivariant minimal isometric immersions of $\mathbf{C}\mathrm{P}^1$ 
into the complex Grassmannian of $2$-planes in an algebraic way \cite{Pen-Xu}. 
Then Koga and the second author apply a generalisation of do Carmo-Wallach to 
obtain those classifications \cite{Kog-NagII} and \cite{Kog-NagIII} 
and this result is generalised to the case where the domain manifold is 
$\mathbf{C}\mathrm{P}^n$ \cite{Koga_Takahashi, Na-18}. 
Furthermore, it enables us to generalise Calabi's rigidity and leads to the result 
that the category of unitary representations of $G$ is isomorphic to the category of full 
holomorphic maps from a flag manifold  to complex Grassmannians satisfying the gauge condition
for semi-positive homogeneous vector bundles, 
which is considered as a generalisation of Takeuchi's result \cite{NaCat}.  

 As for holomorphic isometric embeddings to quadrics, 
a careful application of representation theory
 and an improved understanding of the underlying principles of the generalised do Carmo-Wallach theory, permitted successive generalisations of the domain spaces.
Thus, the case of $\mathbf C \mathrm P^1$  was firstly studied in
\cite{MNT}. Later, innovations in \cite{Na21} allowed the generalisation to $\mathbf C \mathrm P^n$ and opened the possibility of studying more general projective varieties of Grassmannian type. 
These new ideas were put to the test first in the case of $Gr_2(\mathbf C^4),$ and later extended to arbitrary Grassmannians in \cite{MN22, MNxx}.

In the present paper, 
from the aforesaid isomorphism of categories, 
we obtain a transform of holomorphic maps as functors of the categories.   
More precisely, we provide a Penrose-type transform using the direct image sheaves and the pull-back
of the vector bundles.
Different from the well-known transforms of harmonic maps 
(eg. harmonic sequences, B\"acklund transforms),  
both the domain and the target manifolds of holomorphic maps could vary 
under the transform.  

A subcategory of Einstein-Hermitian maps (EH maps for short) is preserved under the transforms. 
The EH map is characterised within its homotopy class by the fact that its associated mean
curvature operator attains the minimal $L^2$ norm.

When the target is a projective space, the functors provide  a tower of transforms of EH maps.
The domain at the top of the tower is the full flag manifold.
The bottom map is an isometric embedding;
its domain is determined by the degree of the pull-back of the hyperplane bundle over the projective space 
by the initial EH holomorphic map from the full flag manifold. 

When the authors constructed the moduli space of holomorphic isometric embeddings of 
Grassmannians into quadrics \cite{MNxx},  
in the last generalising step, we needed quite detailed computations in representation theory to identify the representation space in which
the moduli space is realised. 
Still, one could think that similar, if somewhat more complicated computations, could be used to further expand these results in order to tackle the question of holomorphic isometric embeddings of flag manifolds 
into quadrics. 
The present article also reports on these investigations; interestingly enough, results are obtained through a general structure theory of these mappings  without resorting to
the heavy representation-theoretic computations which were necessary in the general Grassmannian case. 
Our methods provide an explicit description of the moduli space in this case, and
the center of the moduli space is identified with the EH map into the projective space.

 Different from the moduli spaces of those maps into the  complex Grassmannian, 
 non-discrete sets emerge as moduli spaces. 
 Nonetheless, the generalisation of do Carmo-Wallach theory enables us to construct the tower of transforms even in this case, and to  identify
 the moduli spaces at each level of the tower
 with the moduli space of holomorphic isometric embeddings at the bottom of the tower. 
The bottom manifold is again determined by the degree of the pull-back of the 
universal quotient bundle over the quadric by the initial EH holomorphic map.
The tower of transforms preserves the minimality of the $L^2$ norm associated to the 
mean curvature operators.

The paper is organised as follows: Section 2  is devoted to a review of the 
basic notions of Einsten-Hermitian holomorphic maps  
and 
certain basic results of the generalisation of the theory of do Carmo-Wallach
necessary to the description of moduli spaces in this setting.  
 In \S3, we fix our notations for flag manifolds and exhibit the 
moduli spaces of Einstein-Hermitian holomorphic maps of 
flags into complex Grassmannians: those are identified with 
$r$-tuples of non-positive integers modulo some symmetry, 
where $r$ is the rank of the 
groups of  holomorphic isometries on flags (compare to Takeuchi's result). 
The transforms of maps are developed and applied to 
the identification of moduli spaces in \S4. 
In \S5, the transforms are restricted to the case 
where the target spaces are the projective spaces 
and, as a consequence,  each moduli space is in one-to-one correspondence with 
the moduli space of holomorphic isometric embeddings of the bottom flag. 
In \S6, we use a generalisation of do Carmo-Wallach theory to 
describe the moduli spaces of 
Einstein-Hermitian holomorphic maps of 
flags into quadrics.   In \S 7 we construct the tower of transforms when the target
space is a quadric.\\

{\bf Acknowledgements.} The research of Y.N. is supported by JSPS KAKENHI Grant Number 26K06816. O.M. would like to thank the hospitality of Meiji University (Japan), where part of this research was developed.

%%%%%%%%%%%%%%%%%%%%%%%%%%%%%%%%%%
%
%%%%%%%%%%%%%%%%%%%%%%%%%%%%%%%%%%
\section{Preliminaries}
We will use the present section to introduce the basics of Grassmannians, evaluation homomorphisms and induced maps. The discussion is similar to that in \cite{MNT} \S 2, but this time restricted to the case of holomorphic vector bundles.
\subsection{Geometry of Grassmannians}
Let $W$ be a complex  
vector
space and $Gr_p(W)$ the complex  Grassmannian of
$p$-planes in $W.$ Generically, $\underline W$ will stand for the total space of a trivial vector
bundle $\underline W \to B$ with fibre $W$ over some specified base manifold $B.$ Denote by
$\underline W \to Gr_p(W)$ the trivial bundle of fibre $W$ over $Gr_p(W).$ Then, there is a natural
exact sequence 
\[
0 \to S \to  \underline W \to Q \to 0
\]
of vector bundles over $Gr_p(W)$
where $S$ is the total space of the tautological bundle, and $Q$ is the total space of the universal quotient bundle. 
Notice that by using the natural projection 
\[
\pi_Q:\underline W\to Q, 
\]
we can regard $W$ as the space $H^0(Gr_p(W);Q)$ of holomorphic sections of $Q\to Gr_p(W).$

By fixing a Hermitian inner product on $W,$ both the tautological subbundle $S$ and the universal quotient bundle $Q$
inherit induced Hermitian metrics and canonical Hermitian connections.
The second fundamental forms 
\[
H\in\Omega^{1,0}(\mathrm{Hom}(S,Q))\quad  
\text{and}\quad  K\in\Omega^{0,1}(\mathrm{Hom}(Q,S))
\]
in the sense of Kobayashi \cite{Kob} are also obtained. 
The curvature $R^Q$
of the canonical Hermitian connection on $Q$ can then be expressed as 
\[R^Q=-H  K\]
from the Gauss-Codazzi equations on vector bundles, and  the K\"ahler form on the Grassmanian is given
as
\[
\omega_{GR}=-\sqrt{-1}\; \mathrm{tr}\; R^Q.
\]

\subsection{Evaluation homomorphism and induced maps}
Let $M$ be a complex or K\"ahler manifold.
Suppose that $V \to M$ is a holomorphic vector bundle 
admitting non-trivial holomorphic sections, and consider  
the space of holomorphic sections $W = H^0(M;V).$ By definition
of $\underline W \to M,$ there is a bundle homomorphism 
\[ 
ev : \underline W \to V,
\] 
called the evaluation map,
given by $(x, t)\mapsto  t(x)$ for all $t \in W,$ $x \in M.$ \\
The vector bundle $V \to M$ is
said to be {\it globally generated by $W$} if the evaluation map is surjective. 
Under this hypothesis, there is a map 
\[
f : M \to Gr_p(W), 
\]
with $p=\dim W - \text{rank}\; V,$ 
defined by 
\[
f(x) := \ker  ev(x,\cdot) = \{t \in W : t(x) = 0\} .
\]
The map $f$ is  called
the {\it induced map by} $(V\to M,W).$\\

\subsection{Maps satisfying the gauge condition}
Let 
\[
f:M\to Gr_p(W)
\] 
be a holomorphic map. 
Then, the natural exact sequence over $Gr_p(W)$ can be pulled back to an exact sequence over $M.$
Of particular interest to us will be the pull-back bundle of the universal quotient bundle, that is, 
\[
f^*Q\to M.
\] 
If the universal quotient bundle $Q\to Gr_p(W)$ is equipped with
a Hermitian fibre-metric $h^Q$ and a connection $\nabla^Q,$ these are also pulled back to  a fibre-metric $f^*h^Q$ and a connection $\nabla^{f^*Q}$ on $f^*Q\to M,$
which is a holomorphic vector bundle over $M.$\\
 The mapping $f:M\to Gr_p(W)$ is said to be {\it full} if the induced linear map 
\[
W\to H^0 (M;f^*Q)
\] 
is a monomorphism.

 A  {\it Hermitian bundle} $(V,h)$ is a holomorphic vector bundle  over a K\"ahler manifold, $V\to M,$  equipped with  a {\it Hermitian fibre-metric} $h.$
 Each Hermitian bundle admits a unique connection  compatible with the fibre-metric and the holomorphic structure.
 This connection will be called the {\it Hermitian connection} of $(V,h).$ 

Assume that a Hermitian holomorphic bundle 
\[
(V,h)\to M
\] 
is given. 
 We will say
that $f:M\to Gr_p(W)$ {\it satisfies the gauge condition for
$(V,h)$} if there exists an isomorphism 
\[
(V,h)\cong (f^*Q,\;f^*h^Q)
\]
as Hermitian holomorphic bundles.

A Hermitian bundle $(V,h)$ is said to be semi-positive 
if 
\[h(R^\nabla (Z,\bar Z)(v), v)\geq 0,\]
for arbitrary non-zero tangent vector of type $(1,0)$ $Z$ and non-zero $v\in V,$
where $R^\nabla$ is the curvature of the Hermitian connection.

Let $SU(W)$ be the group of special unitary transformations of $W,$ acting as the isometry group of $Gr_p(W).$
Two mappings $f_1,f_2:M\to Gr_p(W)$ are {\it image equivalent} if there is an isometry 
$\phi\in SU(W)$  
of $Gr_p(W)$ such that
\[
f_2=\phi\circ f_1.
\] 

\subsection{Einstein-Hermitian holomorphic maps}

The contraction of the curvature of the Hermitian connection with
the K\"ahler form of the base defines the {\it mean curvature} $K_{EH}$ in the
sense of Kobayashi \cite{Kob}, which is a section of the bundle of endomorphisms of $V.$ 
If the mean curvature 
$K_{EH}$ of $(V, h)$ 
satisfies 
\[
K_{EH} =\lambda I_V
\] 
for some constant $\lambda,$ then
$(V, h)$ is said to be an {\it Einstein-Hermitian vector bundle} 
and the corresponding  Hermitian connection is called a Hermitian
Yang-Mills connection, \cite{Kob,DonKro}.

A holomorphic map 
\[
f:(M, \omega)\to Gr_p(W)
\] 
satisfying the gauge condition for $(V,h)$ is called an
{\it Einstein-Hermitian holomorphic map}  if $(V,h)$ is an 
Einstein-Hermitian bundle \cite{Na13, MacNag, Na15}, where $\omega$ is the K\"ahler form on $M$. 

An Einstein-Hermitian holomorphic map attains the minimum of the $L^2$ norm
$||A||^2,$ in some fixed homotopy class of maps, 
where $A$ is the mean curvature operator of maps \cite{Na13}. 
We do not need the precise definition of $A$ in this article. 
When a map is holomorphic, 
\[
A=-K_{EH}.
\] 
The following result is proven in \cite{Na13}.

\begin{thm}\label{equality_holds}
  If $f:M\to Gr_p(\mathbf C^{p+q})$ is a smooth map of a compact K\"ahler manifold, then
  \[
    \left(\frac{2\pi}{(m-1)!}\int_Mc_1(f^*Q)\wedge \omega^{m-1}\right)^2 \leq q {\rm vol}(M)\int_M|A|^2 dv_M.
  \]
  The equality holds if and only if $f$ is an Einstein-Hermitian holomorphic or anti-holomorphic map
  with EH constant $-|c|,$ where
  \[
c = \frac{2\pi}{q {\rm vol}(M)(m-1)!}\int_M c_1(f^*Q)\wedge \omega_M^{m-1}.
\]
If the target is a quadric hypersurface of the complex projective space, then
\[8\left(\frac{\pi}{(m-1)!}\int_Mc_1(f^*Q)\wedge\omega_M^{m-1}\right)^2\leq {\rm vol}(M)\int_M|A|^2 dv_M.\]
 The equality holds if and only if $f$ is an Einstein-Hermitian holomorphic or anti-holomorphic map
  with EH constant $-|c|,$ where
\[c=\frac{\pi}{{\rm vol}(M)(m-1)!}\int_Mc_1(f^*Q)\wedge \omega_M^{m-1}.\]
\end{thm}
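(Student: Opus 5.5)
The plan is a Cauchy--Schwarz argument applied to the mean curvature operator. For a smooth map $f$, regard $A$ as a section of $\mathrm{End}(f^*Q)$ and use the identity from \cite{Na13} that relates $\mathrm{tr}\,A$ pointwise to the first Chern form.

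The key point is the decomposition of the pulled-back second fundamental form $f^*H$ into $(1,0)$ and $(0,1)$ parts, $H=H'+H''$. The pullback of the Gauss--Codazzi relation $R^Q=-HK$, contracted with $\omega_M$, should give
\[
\mathrm{tr}\,A \;=\; -\,\mathrm{tr}\,K_{EH}+(\text{a term measuring the anti-holomorphic part}),
\]
and more precisely
\[
\frac{2\pi}{(m-1)!}\,c_1(f^*Q)\wedge\omega_M^{m-1}\;=\;\pm\,(|H'|^2-|H''|^2)\,dv_M
\]
up to normalisation. Here $A$ is the trace-part contraction $\sum_i (H_{e_i})^*H_{e_i}$-type operator, whose trace is $|H'|^2+|H''|^2$. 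I would first fix the normalisation, checking it on the tautological example $f=\mathrm{id}$ of $Gr_p$, so that
\[
\int_M |\mathrm{tr}\,A|\,dv_M \;\ge\; \left|\frac{2\pi}{(m-1)!}\int_M c_1(f^*Q)\wedge\omega^{m-1}\right|,
\]
with equality if and only if $H''\equiv0$ or $H'\equiv0$, that is, $f$ is holomorphic or anti-holomorphic.

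Next I would apply two Cauchy--Schwarz inequalities. Pointwise, $|\mathrm{tr}\,A|^2\le q|A|^2$, since $A$ acts on a rank-$q$ bundle; equality holds if and only if $A$ is a scalar multiple of the identity. Integrally, $\left(\int|\mathrm{tr}A|\right)^2\le \mathrm{vol}(M)\int|\mathrm{tr}A|^2$, with equality if and only if $|\mathrm{tr}A|$ is constant. Chaining these gives the stated inequality. For equality, $f$ must be (anti)holomorphic, so $A=-K_{EH}$; moreover $A=\mu I$ with $\mu$ constant. This is exactly the Einstein--Hermitian condition. The constant is then read off by integrating the trace: $q\mu\,\mathrm{vol}(M)=-\frac{2\pi}{(m-1)!}\int c_1\wedge\omega^{m-1}$ in the holomorphic case, giving $-|c|$, and the sign flips in the anti-holomorphic case. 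Conversely, an EH (anti)holomorphic map makes every inequality an equality.

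For the quadric $Q_n\subset\mathbf{C}\mathrm{P}^{n+1}$, regarded as the real Grassmannian of oriented 2-planes, I would repeat the argument with $Q$ the rank-2 universal quotient bundle carrying a real structure. The same Cauchy--Schwarz steps go through, but the Kähler normalisation of the quadric and the relation between $c_1$ of the quotient bundle and the trace differ. Tracking these constants gives the factor $8$ and $\pi$ in place of $2\pi/q$.

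The main obstacle is the first step: the precise pointwise identity linking $\mathrm{tr}A$, $|H'|^2-|H''|^2$ and the Chern form, with correct constants, and especially its quadric analogue, where the second fundamental form lives in a real bundle. I would derive this by computing in a local unitary frame at a point.
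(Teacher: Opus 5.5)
Your argument is correct and is the standard one. Splitting $f^*H$ into its $(1,0)$ and $(0,1)$ parts makes $-\mathrm{tr}\,A$ the sum, and the density of $\frac{2\pi}{(m-1)!}c_1(f^*Q)\wedge\omega_M^{m-1}$ (the Chern--Weil form of the pulled-back connection) the difference, of the $\partial$- and $\bar\partial$-energy densities; Cauchy--Schwarz in $\mathrm{End}(f^*Q)$ and in $L^2(M)$, with your equality analysis, then gives the result. The paper itself gives no proof here and only quotes \cite{Na13}.

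One caution for the quadric case: at equality $A=\mu\,\mathrm{Id}_Q$ on a real rank-two bundle, so $|A|^2=2\mu^2$, and the factor-$8$ equality forces $|\mu|=\frac{2\pi}{{\rm vol}(M)(m-1)!}\bigl|\int_M c_1(f^*Q)\wedge\omega_M^{m-1}\bigr|$. This is the $q=1$ value, as the totally geodesic $\mathbf{C}\mathrm{P}^m\subset Gr_{2m}(\mathbf R^{2m+2})$ confirms, and it is twice the stated $|c|$. So your constant-tracking will reproduce the $8$, but not the $\pi$ in $c$, unless the EH constant is normalised differently in the real case.
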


  An Einstein-Hermitian map 
  is isometric
   under special conditions.
  A fundamental result in this direction is the following one:
  \begin{thm}\label{fromagag}
    \cite[Theorems 6.1 and 6.3]{Na15}
    Let $f$ be a holomorphic map from a compact K\"ahler manifold $M$ into the
    complex projective space $\mathbf C \mathrm P^n,$ identified with $Gr_n(\mathbf C^{n+1}),$ or  into the quadric $Gr_n(\mathbf R^{n+2}).$ Suppose that the K\"ahler form $\omega$ on
    $M$ is in the cohomology class represented by the first Chern class of the pull-back of the universal quotient bundle
\[
\mathcal O(1)\to Gr_n(\mathbf C^{n+1}),\quad  \text{or} \quad  
\mathcal O(1)\to Gr_n(\mathbf R^{n+2}). 
\]
Then, the following three conditions are equivalent. \begin{enumerate}
\item The energy density of f is constant.
\item $f$ is an Einstein-Hermitian holomorphic immersion. 
\item $f$ is an isometric immersion.
  \end{enumerate}
\end{thm}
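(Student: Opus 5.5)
The statement to prove is Theorem~\ref{fromagag}: for a holomorphic map $f$ into $\mathbf C\mathrm P^n$ or into the quadric, with $[\omega]=c_1(f^*\mathcal O(1))$, constant energy density, being an Einstein--Hermitian holomorphic immersion, and being an isometric immersion are equivalent. My plan is to reduce all three conditions to the single pointwise function
\[
\Lambda := \mathrm{tr}_\omega (f^*\omega_{GR}),
\]
where $\omega_{GR}$ is the Fubini--Study (respectively induced) Kähler form. Because $f$ is holomorphic, the energy density is a fixed constant multiple of $\Lambda$. Moreover, since $f^*\mathcal O(1)$ is a line bundle, its mean curvature $K_{EH}$ is a scalar function. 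By $R^Q=-HK$ and $\omega_{GR}=-\sqrt{-1}\,\mathrm{tr}\,R^Q$, this scalar is again a constant multiple of $\Lambda$. In the quadric case I would use the rank-one bundle $\mathcal O(1)$, whose curvature is proportional to the restriction of the Fubini--Study form. Hence (1)$\Leftrightarrow$ "$K_{EH}$ is constant", i.e. $(f^*\mathcal O(1),f^*h)$ is Einstein--Hermitian.

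The cohomological hypothesis then fixes the constant. Writing $f^*\omega_{GR}=\omega+\sqrt{-1}\partial\bar\partial\varphi$ with $\varphi$ global (by the $\partial\bar\partial$-lemma on compact Kähler $M$), taking the trace gives
\[
\Lambda = m + \Delta\varphi .
\]
Integrating shows that the average of $\Lambda$ is $m$, so if $\Lambda$ is constant then $\Lambda\equiv m$. The implication (3)$\Rightarrow$(1) is immediate, since for an isometric immersion $f^*\omega_{GR}=\omega$ and $\Lambda=m$. For (1)$\Leftrightarrow$(2) it remains to check that an EH holomorphic map in this setting is automatically an immersion; this will follow from (2)$\Rightarrow$(3), because an isometric map is immersive.

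The main obstacle is (1)$\Rightarrow$(3): I must upgrade the trace identity $\Lambda\equiv m$ to the form identity $f^*\omega_{GR}=\omega$. From $\Delta\varphi=0$ on compact $M$, $\varphi$ is constant, so $\sqrt{-1}\partial\bar\partial\varphi=0$ and $f^*\omega_{GR}=\omega$. This works only if the potential is globally defined, which is exactly what the hypothesis $[\omega]=[f^*\omega_{GR}]=c_1(f^*\mathcal O(1))$ (up to the normalising $2\pi$) guarantees.

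The step I would double-check is the normalisation. The Kähler form on the quadric must be the restriction of the projective one, so that $c_1(\mathcal O(1))$ is represented by it. The constants relating energy density, $K_{EH}$ and $\Lambda$ must also be consistent with the conventions of \cite{Na15}.
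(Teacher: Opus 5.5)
Your argument is correct. The paper gives no proof of this statement; it is quoted from \cite[Theorems 6.1 and 6.3]{Na15}, so there is no in-text argument to compare against.

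What you wrote is the standard self-contained proof. For holomorphic $f$, both the energy density and the scalar mean curvature of the line bundle $f^*\mathcal O(1)$ are constant multiples of $\Lambda=\mathrm{tr}_\omega f^*\omega_{GR}$, which gives (1)$\Leftrightarrow$ Einstein--Hermitian. The cohomological hypothesis plus the $\partial\bar\partial$-lemma then upgrades $\Lambda\equiv m$ to $f^*\omega_{GR}=\omega$, which gives (3) and hence immersivity. Your handling of the immersion clause in (2) is not circular, since (1)$\Rightarrow$(3) never assumes $f$ is immersive.

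If you want to avoid the potential $\varphi$, two equivalent finishes for (1)$\Rightarrow$(3) are available once $\Lambda\equiv m$ is known. First, $f^*\omega_{GR}-\omega$ is closed, exact and primitive, hence harmonic by the K\"ahler identities, and therefore zero. Second, using $f^*\omega_{GR}\geq 0$, the arithmetic--geometric mean inequality gives $(f^*\omega_{GR})^m\leq(\Lambda/m)^m\omega^m=\omega^m$ pointwise. Both sides have the same total integral, which forces all eigenvalues of $f^*\omega_{GR}$ relative to $\omega$ to equal $1$.

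The normalisation issue you flag for the quadric is harmless. The curvature form of $(Q,h_Q)$ and the K\"ahler form of $Gr_n(\mathbf R^{n+2})$ are both $O(n+2)$-invariant real $(1,1)$-forms, and such forms span a one-dimensional space. For $n=2$ this needs the full $O(4)$, which exchanges the two rulings of $\mathbf C\mathrm P^1\times\mathbf C\mathrm P^1$. So the two forms are proportional, and the constants only fix the scale in which ``isometric'' is meant. This is consistent with the convention $\omega=-\frac{\sqrt{-1}}{2\pi}R^{f^*\mathcal O(1)}$ adopted later in the paper.
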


\section{Classification}
\subsection{Flag manifolds and crossed Dynkin diagrams}
In the present paper the relevant compact K\"ahler manifolds of  interest   are flag manifolds.
Let $F$ be a (generalized) flag manifold 
expressed as  $ G/ K$,  where $G$ is a compact, connected, simply connected semisimple Lie group
and 
$K=C(T)$ is the centraliser of a subtorus $T$ of a 
maximal torus $T_0 \subset G.$
Fix a maximal torus $T_0$ in $G$ and a choice of positive roots $\Delta^+$ of $\mathfrak g^c$ (the complexification of the Lie algebra $\mathfrak g$ of $G$).
Then the simple roots of $\mathfrak g^c$
relative to $(T_0,\Delta^+)$  are defined; denote the simple roots by 
\[
\{\alpha_1,\alpha_2,\dots,\alpha_r\}.
\]
 The standard subtorus $T$ (and
hence the flag $F$) can be specified by choosing a subset $\mathcal T$
of the set of  simple roots   of $\mathfrak{g}^c$, and $F$ is denoted by 
$F_G(\mathcal T)$. 
In diagrammatic notation  \cite{BastonEastwood} 
this corresponds to crossing the nodes in the Dynkin diagram
associated to the simple roots in $\mathcal T.$
A sample of such a diagram for a $\mathrm A$-type Lie algebra would be 
\[ \dynkin[scale=1.8]A{XXX}\]
corresponding to the complete flag 
\[
F_{1,2,3}(\mathbf C^4)= F_{SU(4)}(\alpha_1,\alpha_2,\alpha_3),
\] 
defined by choosing $\mathcal T$ as the full set of
simple roots of $\mathfrak{su}(4)$: 
\[
\mathcal T=\{\alpha_1,\alpha_2,\alpha_3\}.
\]

Not all the vector bundles considered in this article are line bundles.
  It is known that all line bundles over generalized flag manifolds are homogeneous bundles; so, as
  a generalization, we consider higher rank homogeneous vector bundles over generalized flag manifolds. 
Let $V_{0}$ be a $K$-module. Then the vector bundle $G\times_{K}V_{0}$ is called a {\it homogeneous vector bundle.}
To describe irreducible homogeneous bundles over the flag manifold, suppose 
that 
\[
\mathcal T=\{\alpha_1,\alpha_2,\dots,\alpha_s\}, s\leq r,
\]
are the simple roots characterising the flag.
We denote by 
\[
\{\varpi_1,\varpi_2,\dots,\varpi_r\}
\] 
the fundamental weights corresponding to 
$\{\alpha_1,\alpha_2,\dots,\alpha_r\},$
\cite{tomDieck}.
If $V_{0}$ is an irreducible $K$-module with 
\[
-\sum_{i=1}^{r} k_i\varpi_i
\] 
as the lowest weight, the vector bundle $G\times_{K} V_{0}$ is called an 
{\it irreducible homogeneous vector bundle with 
$-\sum_{i=1}^{r} k_i\varpi_i$ as the lowest weight} and  
denoted by 
\[
\mathcal O(k_1\varpi_1+k_2\varpi_2+\dots+k_r\varpi_r)\to G/K.
\] 
Especially, if a line bundle $L\to F$ has
\[
-(k_1\varpi_1 +k_2 \varpi_2+\cdots 
+k_s \varpi_{s})
\] 
as its 
weight
then it is denoted by  
\[
\mathcal O(k_1,k_2, \cdots, k_s)\to F. 
\]

By labelling the nodes of the Dynkin diagram with crossed
nodes, vector bundles over flag manifolds 
can also be identified diagrammatically.
To this end we add the label $k_i$ to the node representing the simple root 
$\alpha_i$ in the Dynkin diagram with crossed nodes. Notice that line bundles
\[
\mathcal O(k_1,k_2,\dots,k_s)\to F_G(\mathcal T)
\]
will have labels over crossed nodes only, while general vector bundles
\[
\mathcal O(k_1\varpi_1+k_2\varpi_2+\dots+k_r\varpi_r)\to  F_G(\mathcal T)
\]
will also have labels over uncrossed nodes.

For instance, consider a flag manifold of $\mathrm A_3$-type, and suppose that 
\[
\mathcal T=\{\alpha_1,\alpha_2\}.
\] 
The line bundle 
\[
\mathcal O(1,0)\to F_{SU(4)}(\alpha_1,\alpha_2)
\] 
can be identified with the diagram
\[ \dynkin[labels*={1,,},
scale=1.8]A{XX*}\]
while the vector bundle 
\[
\mathcal O(\varpi_1+\varpi_3)\to F_{SU(4)}(\alpha_1,\alpha_2)
\]
is represented by
\[ \dynkin[labels*={1,,1},
scale=1.8]A{XX*}.
\]

\subsection{A remark on Einstein-Hermitian maps from flags}

  Notice that the definition of Einstein-Hermitian mapping depends on the choice of a K\"ahler form, or metric, on the base manifold.
  The non-uniqueness of this choice poses, however, no problem to the proper definition of Einstein-Hermitian map thanks to the following
  theorem of Kobayashi:
  
  \begin{thm}  
    \cite[Theorems 3.3 and 3.4]{KobHomVBs}
Let $E$ be a holomorphic vector bundle over a compact 
  complex manifold $M$ with an ample line bundle $H$. 
  Let $G$ be a connected compact Lie group of automorphisms of $E$ 
  acting transitively on $M$. 
  Assume that the isotropy subgroup $G_{o}$ of $G$ at a point $o \in M$ 
  acts irreducibly on the fibre $E_{o}$. 
  Then there exists a $G$-invariant Hermitian structure in $E$ and a 
  $G$-invariant K\"ahler metric $g$ on $M$ whose K\"ahler form 
  represents the Chern class $c_1(H)$ of $H$ and 
  $(E,h)\to (M,g)$ is an Einstein-Hermitian vector bundle. 
  
   Assume further that $G$ is semisimple and $G_0$
is the centraliser $C(T)$ of a toral subgroup $T$ of $G.$ Then $(E, h)$ is an
Einstein-Hermitian vector bundle over $(M, g)$ with irreducible holonomy group
and $E$ is $H$-stable for any ample line bundle $H$ over $M.$
\end{thm}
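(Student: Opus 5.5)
The plan is Kobayashi's construction: average a Hermitian metric and a Kähler metric over $G$, then use irreducibility of the isotropy action and Schur's lemma to force the mean curvature to be a constant multiple of the identity.

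\textbf{Invariant metric on $E$.} Fix the base point $o\in M$ and write $M=G/G_o$. Choose a $G_o$-invariant Hermitian inner product $h_o$ on $E_o$; it exists by averaging over the compact group $G_o$. Transport it by $h_{go}(gv,gw)=h_o(v,w)$. This is well defined because $h_o$ is $G_o$-invariant, and it gives a smooth $G$-invariant Hermitian structure $h$. Since $G$ acts by holomorphic bundle automorphisms, the Hermitian connection of $(E,h)$ is $G$-invariant, and so is its curvature.

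\textbf{Invariant Kähler metric.} Take any Hermitian metric $h_H$ on $H$ with positive curvature form; this is possible since $H$ is ample. Average it over $G$ with Haar measure. Positivity of curvature is a convex condition on the Kähler forms $-\sqrt{-1}\,\partial\bar\partial\log$ of the metrics, so the averaged metric still has positive curvature. Its form $\omega$ represents $c_1(H)$ (up to the normalising factor $2\pi$) and is $G$-invariant. The same convexity argument applies if one averages the Kähler forms directly: each $g^*\omega$ lies in the same cohomology class, since $G$ is connected.

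\textbf{Einstein--Hermitian property.} The mean curvature $K_{EH}$ is the trace of the curvature against $\omega$, so it is a $G$-invariant section of $\mathrm{End}(E)$. At $o$, $K_{EH}(o)$ commutes with the $G_o$-action on $E_o$, and it is Hermitian with respect to $h_o$. Schur's lemma, applied to the irreducible action of $G_o$ on $E_o$, gives $K_{EH}(o)=\lambda I$. By invariance and transitivity, $K_{EH}=\lambda I_E$ everywhere, so $(E,h)\to(M,g)$ is Einstein--Hermitian.

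\textbf{Irreducible holonomy and stability, when $G$ is semisimple and $G_o=C(T)$.} Here $M$ is a flag manifold and $G_o$ has maximal rank. Suppose the holonomy were reducible. Then $E$ would split into parallel orthogonal subbundles, because a Hermitian--Einstein connection with reducible holonomy splits holomorphically and isometrically. I would show that the holonomy algebra at $o$ contains the curvature endomorphisms $R(X,Y)$, and that on a homogeneous bundle these are given by the action of $[\mathfrak m,\mathfrak m]_{\mathfrak k}$ on $E_o$. For a flag manifold $[\mathfrak m,\mathfrak m]$ generates $\mathfrak k$, because $G$ is semisimple and $\mathfrak g=\mathfrak k+\mathfrak m+[\mathfrak m,\mathfrak m]$ is an ideal. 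The holonomy therefore contains the image of the identity component of $G_o$, which is connected since it is a centraliser of a torus in a simply connected group. It thus acts irreducibly, which is a contradiction.

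Stability then follows from the Kobayashi--Lübke inequality. An Einstein--Hermitian bundle is polystable with respect to any Kähler class. Here a $G$-invariant Kähler form in $c_1(H)$ exists for every ample $H$, and it makes $E$ Einstein--Hermitian by the argument above. Combined with irreducible holonomy, this gives that $E$ is indecomposable, hence $H$-stable.

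\textbf{Main obstacle.} The hardest step is the holonomy argument: identifying the curvature of an invariant connection on $G\times_{G_o}E_o$ with the isotropy representation of $[\mathfrak m,\mathfrak m]_{\mathfrak k}$, which requires the canonical-connection computation. If the Hermitian connection differs from the canonical one, I would instead use that the canonical connection is itself Hermitian for the invariant metric, and hence equals the Hermitian connection by uniqueness.
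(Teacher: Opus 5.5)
The paper gives no proof of this statement; it only cites Kobayashi. Your outline is the standard route: average to get invariant $h$ and $\omega$, apply Schur's lemma to $K_{EH}$, then use the canonical connection for the holonomy. The Einstein--Hermitian part is fine with one correction. Averaging $h_H$ over $G$ presupposes a $G$-action on $H$, which is not given. Your alternative, averaging the K\"ahler forms $g^*\omega$ (all cohomologous to $\omega$ because $G$ is connected), is the correct version.

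The holonomy step has two genuine gaps. First, your fallback for identifying the Hermitian connection with the canonical one does not work. Uniqueness of the Hermitian connection requires compatibility with $\bar\partial_E$ as well as with $h$, and metric compatibility is the easy half. Here is a clean fix. The difference between the ($G$-invariant) Hermitian connection and the canonical connection is a $G$-invariant $\mathrm{End}\,E$-valued $1$-form. It is therefore determined by a $G_o$-equivariant map $\mathfrak m\cong T_oM\to\mathrm{End}\,E_o$. Such a map must vanish: $T$ is central in $G_o=C(T)$, so it acts on $E_o$ by a scalar (Schur) and hence trivially on $\mathrm{End}\,E_o$, while it acts on $\mathfrak m$ with only non-zero weights. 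So the Hermitian connection is the canonical one, with curvature $-\rho([X,Y]_{\mathfrak k})$ at $o$.

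Second, your Lie-algebra step is garbled and, as stated, false. What is true is that $\mathfrak j=\mathfrak m+[\mathfrak m,\mathfrak m]$ is an ideal. Its Killing-orthogonal complement is an ideal $\mathfrak i\subset\mathfrak k$ with $[\mathfrak i,\mathfrak m]=0$, and $[\mathfrak m,\mathfrak m]_{\mathfrak k}$ spans only $\mathfrak k\cap\mathfrak j$. Semisimplicity does not give $\mathfrak i=0$. You need $G$ to act almost effectively on $M$, or at least $\mathfrak i$ to act trivially on $E_o$. This hypothesis is missing from the statement as quoted, and without it the conclusion fails. Take $G=SO(3)\times SU(2)$ acting on $E=\mathbf C\mathrm P^1\times\mathbf C^2$, the first factor on the base and the second on the fibre. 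Then $G_o=SO(2)\times SU(2)=C(SO(2)\times 1)$ acts irreducibly on $E_o$. Yet the holonomy is trivial and $E=\mathcal O\oplus\mathcal O$ is not stable. In the paper's Dynkin-diagram setting, the condition is that every connected component of the diagram carries a crossed node.

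With this hypothesis added and the connection identified as above, your argument closes. Two small points remain. The last step should invoke the Kobayashi--L\"ubke theorem that Einstein--Hermitian bundles are polystable, not the Chern-number inequality of that name. You should also say explicitly that the Hermitian connection, and hence its holonomy, does not depend on the K\"ahler form. That is what lets the argument work for every ample $H$.
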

In the case of line bundles, ampleness can be replaced by positivity.
We always suppose that the K\"ahler form is $G$-invariant and represents the first Chern class $c_1(H)$ of
some positive line bundle, so that it is the curvature of the Hermitian Yang-Mills connection.
By virtue of this
theorem, if $f$ is proven to be an Einstein-Hermitian map for this $G$-invariant metric on the base, it will also be Einstein-Hermitian  for any other such
$G$-invariant metric.

  If a vector bundle is not irreducible then there exist various $G$-invariant holomorphic vector bundle structures
  that might be defined on it (for the irreducible case the $G$-invariant holomorphic structure is unique \cite{NaCat}, Lemma 3.1).
  Hence, we fix the holomorphic structure in the following way:
Since every unitary module of  $C(T)$ is completely reducible, any
homogeneous vector bundle $V$ can be decomposed into a direct sum of irreducible
homogeneous vector bundles (as $C^\infty$-bundles) 
\[
V=\oplus_i V_i,
\]
where $V_i$ is an irreducible homogeneous bundle equipped with a Hermitian metric $h_i.$
Thus, the Hermitian metric $h$ on $V$ can be regarded as the direct sum of the metrics
$h_i.$ We further fix  a holomorphic
vector bundle structure on $V$  as a direct sum of holomorphic vector bundles
$(V_i,h_i).$  With this structure, the Hermitian connection on $(V,h)$ is the canonical
connection \cite{KobNo}.
To emphasize the holomorphic vector bundle structure of $V$ as a direct sum of
irreducible homogeneous bundles, we denote the holomorphic homogeneous vector
bundle $(V,h)$ by 
\[\oplus_i(V_i,h_i).
\]
If $i=1,$ that is if $(V,h)=(V_1,h_1),$ then $(V,h)$ is an Einstein-Hermitian vector bundle \cite{KobHomVBs}.

Let $(V_i, h_i)$ ($i=1,2$)  
be homogeneous vector bundles with invariant fiber metrics $h_i$ 
over a flag manifold $G/K$.  
Then $V_1\to G/K$ is said to be {\it isomorphic} to $V_2\to G/K$ 
as a {\it homogeneous bundle}, 
if there exists a $G$-equivariant  
bundle map 
\[
\phi:V_1\to V_2
\] 
preserving the Hermitian holomorphic vector bundle structure.
Such a bundle map $\phi$ is called a {\it $G$-equivariant bundle isomorphism}.

\subsection{Rigidity theorems}

For $G$ a compact, connected, simply connected, semisimple Lie group, fix a toral subgroup $T$ and let $K$ denote its centraliser in $G.$
Denote by ${\mathcal R_G}$ the category whose objects are unitary representations of $G$ and whose morphisms are intertwining isomorphisms between
$G$-representations. Under these hypotheses, for each $W\in \mathcal R_G$ it follows from the Bott-Borel-Weill theorem that
there is a unique semi-positive  homogeneous
vector bundle 
\[
\oplus_i(V_i,h_i)\to G/K 
\]
such that 
\[
H^0(G/K;\oplus_i(V_i,h_i))=W,
\] 
which induces a map \[
f:G/K\to Gr_p(W)
\] 
satisfying the gauge-condition for $\oplus_i(V_i,h_i).$
These maps
form a category denoted by ${\mathcal H_T}$ with $G$-equivariant bundle isomorphisms as
morphisms. The main theorem in \cite{NaCat} establishes that the functor 
\[
\alpha_T:{\mathcal R_G}\to {\mathcal H_T}
\] 
is an isomorphism of categories.

If $(V,h)$ is irreducible, then $f$ is an Einstein-Hermitian holomorphic map.  
The subcategory of $\mathcal{H}_T$ consisting of 
maps 
$f:G/K \to Gr_p(W)$ with the gauge condition for irreducible homogeneous vector bundles
as homogeneous vector bundles will be denoted by $\widetilde{\mathcal{H}}_T.$

\begin{thm}\label{subcat}
  The subcategory 
  $\widetilde{\mathcal{H}}_T$ 
  is isomorphic to the subcategory of
  $\mathcal{R}_G$ consisting of irreducible representations of $G$. 
\end{thm}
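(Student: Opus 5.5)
The plan is to restrict the isomorphism of categories $\alpha_T:\mathcal R_G\to\mathcal H_T$ from \cite{NaCat} to full subcategories on both sides. Once the restriction is known to be well defined and surjective on objects, fullness and faithfulness follow automatically. Write $\mathcal R_G^{irr}$ for the full subcategory of irreducible representations. Since $\alpha_T$ is already an isomorphism of categories, it suffices to show that $\alpha_T$ maps objects of $\mathcal R_G^{irr}$ onto exactly the objects of $\widetilde{\mathcal H}_T$. Morphisms then match up because both subcategories are full: in $\mathcal R_G$ the morphisms are intertwining isomorphisms, and in $\mathcal H_T$ they are $G$-equivariant bundle isomorphisms.

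\emph{Step 1: irreducible $W$ gives an irreducible bundle.} Let $W$ be irreducible with highest weight $\lambda$. By Bott--Borel--Weil, $W\cong H^0(G/K;V)$ where $V=G\times_K V_0$ and $V_0$ is the irreducible $K$-module with lowest weight $-\lambda$. This $V_0$ is the $K$-submodule of $W^*$ generated by a lowest weight vector, or dually the $K$-quotient of $W$ through the highest weight vector. So the semi-positive homogeneous bundle singled out in the construction of $\alpha_T$ is this irreducible bundle. Here I use the uniqueness statement quoted before the theorem, which says that the bundle with $H^0=W$ is unique. It follows that $\alpha_T(W)\in\widetilde{\mathcal H}_T$.

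\emph{Step 2: irreducible bundle gives irreducible $W$.} Conversely, take $f\in\widetilde{\mathcal H}_T$ satisfying the gauge condition for an irreducible homogeneous bundle $(V,h)$, and let $W=\alpha_T^{-1}(f)$. Then $W=H^0(G/K;V)$ by the construction of $\alpha_T$. Since $V$ is globally generated by $W$ and hence has nonzero sections, Bott--Borel--Weil applied to the irreducible $K$-module $V_0$ says that $H^0(G/K;V)$ is the irreducible $G$-module whose lowest weight is the lowest weight of $V_0$. Therefore $W$ is irreducible.

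\emph{Step 3: morphisms.} A morphism in $\widetilde{\mathcal H}_T$ between irreducible objects corresponds under $\alpha_T^{-1}$ to an intertwining isomorphism between irreducible representations, and conversely. So $\alpha_T$ restricts to a bijection on Hom-sets, and the restriction is an isomorphism of categories.

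The main obstacle is Step 1: I need the bundle that $\alpha_T$ attaches to an irreducible $W$ to be irreducible, and not merely a semi-positive direct sum that happens to have the same $H^0$. I would handle this by invoking the uniqueness part of the construction in \cite{NaCat}. If needed, I would add the observation that the fibre of $f^*Q$ at the base point is $W/f(o)$, which is the $K$-module quotient of $W$ by the sections vanishing at $o$. For irreducible $W$ this quotient is generated by the image of a highest weight vector, so it is an irreducible $K$-module.
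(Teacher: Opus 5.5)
Your proposal is correct and is essentially the argument the paper intends (it gives no separate proof): restrict the isomorphism $\alpha_T:\mathcal R_G\to\mathcal H_T$ to the full subcategories on both sides, using Bott--Borel--Weil to match irreducible $G$-modules with irreducible homogeneous bundles that have nonzero sections. Two slips do not affect the conclusion, since only irreducibility is used: Step 1 and Step 2 use weight conventions that differ by passing from $W$ to $W^*$, and your fallback claim that $W/f(o)$ is $K$-irreducible also needs the nilradical of the parabolic stabilising $o$ to act trivially on the fibre, so that $W/f(o)$ is a quotient of the irreducible Levi-module of coinvariants.
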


\begin{rem}
  As a result, we conclude that the image equivalence classes of Einstein-Hermitian holomorphic maps $f:G/K \to Gr_p(W)$  with gauge condition for irreducible homogeneous vector bundles as homogeneous vector bundles coincide with  the quotient of the set
  $\mathbf Z^r_{\geq 0}$  of non-negative integer $r$-tuples, being $r$ the rank of $G$ 
by some symmetry. 

Notice that $\mathbf Z^r_{\geq 0}$ coincides with the set of unitary irreducible representations of $G$. From this,  the indicated symmetry is easily understood. 
\end{rem}

The set of holomorphic maps 
\[
f:G/K \to Gr_p(W)
\] 
with gauge condition for homogeneous vector bundles $\oplus_i(V_i,h_i)$  
does not depend on the isotropy subgroup $K.$ Indeed there is a fundamental reason for this fact which will be explained
in the following sections.

\section{Transforms}

In the previous sections irreducible homogeneous vector bundles over $G/K$ with standard fibre determined by the lowest weight $\varpi$
have been generally denoted by $\mathcal O(\varpi)\to G/K.$ In the following sections vector bundles over different flag manifolds will be considered.
In these cases we will sometimes use the abridged notation 
\[
\mathcal O_F(\varpi)
\] 
to denote $\mathcal O(\varpi)\to F.$ We also sometimes write
\[
\mathcal O_K(\varpi)
\]
when the choice of $G$ in $F=G/K$ is celar from context. We do not distinguish a vector bundle from the associated locally free sheaf.

\subsection{Differential geometry of the direct image sheaf construction.}
 We want to apply the direct image sheaf construction (\cite{Grif}, p. 463, \cite{BastonEastwood}, p. 
 50)
to the case in  which we have a bundle of flag manifolds 
\[
\pi: G/K\to G/L,
\] 
with the flag manifold $L/K$ as standard fibre, and we let 
\[
V\to G/K
\] 
be  an irreducible, holomorphic, homogeneous vector bundle 
with non-trivial holomorphic sections. 
In this sense,  $V\to G/K$ is
defined as  $V=G\times_K V_0$ where $V_0$ is an irreducible
$K$-module, and 
\[
H^0(G/K;V)
\] 
is a non-trivial irreducible $G$-module.

By the aforesaid direct image sheaf construction, for all $x\in G/L,$ the restriction $V|\pi^{-1}(x)$ is a holomorphic vector bundle over $\pi^{-1}(x).$ By attaching the vector space of holomorphic sections of this bundle $H^0(\pi^{-1}(x); V|\pi^{-1}(x))$
to each $x\in G/L,$ a new holomorphic vector bundle 
\[
\widetilde V\to G/L
\] 
is obtained, such that 
\[
\widetilde V_x = H^0(\pi^{-1}(x);V|\pi^{-1}(x)).
\] 
Therefore, if 
\[
V=\mathcal O_{K}(\sum_i k_i\varpi_i),
\] 
then 
\[
\widetilde V=\mathcal O_{L}(\sum_i k_i\varpi_i),
\] 
(see \cite{BastonEastwood}). 
The correspondence thus defined by
the direct image sheaf construction will be denoted by $\delta.$

\subsection{Inverse construction.} An inverse procedure can be devised for the aforesaid construction: consider the fibre bundle of flag manifolds 
\[
G/K\to G/L
\] 
and an irreducible homogeneous holomorphic vector
bundle 
\[
\widetilde V\to G/L
\] 
with non-trivial holomorphic sections so that 
\[
\widetilde V= G\times_L \widetilde V_0
\] 
where $\widetilde V_0$ is an irreducible $L$-module. Since $K\subset L$ is a subgroup, $\widetilde V_0$ can
be regarded as a $K$-module which will decompose under the action of $K$  into a sum of irreducible $K$-modules. Let us
denote by $V_0$ the  irreducible $K$-module with the lowest weight,
always appearing in the decomposition of $\widetilde V_0$ with multiplicity one. 
Then 
\[
V=G \times_K V_0
\] 
is the total space of an irreducible homogeneous holomorphic vector bundle over $G/K.$
This inverse procedure actually reverses the direct image sheaf construction above and
ensures that if 
\[
\widetilde V=\mathcal O_{L}(\sum_i k_i\varpi_i)
\] 
then 
\[
V=\mathcal O_{K}(\sum_ik_i\varpi_i).
\] 
We will use $\gamma$
to denote the correspondence defined by the inverse construction.\\

By the Bott-Borel-Weil theorem, we have the `Ward correspondence' 
\[
H^0(G/K;V)=H^0(G/L;\widetilde V),
\] 
so we obtain

\begin{thm}\label{sheaf}
Let 
\[
G/K\to  G/L
\]  
be a fibre bundle of flag manifolds. 
Then, the direct image sheaf and its inverse 
constructions define an identification between the image equivalence class
of  Einstein-Hermitian holomorphic maps 
\[
f:G/K\to Gr_p(W)
\] 
with gauge condition for irreducible homogeneous vector bundles, and  the image equivalence class of Einstein-Hermitian holomorphic maps 
\[
\widetilde f:G/L\to Gr_{\widetilde p}(W)\] 
with gauge condition for irreducible homogeneous vector bundles.
\end{thm}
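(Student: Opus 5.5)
We will reduce the statement to Theorem \ref{subcat}, applied once to $G/K$ and once to $G/L$, together with the Ward correspondence $H^0(G/K;V)=H^0(G/L;\widetilde V)$. Theorem \ref{subcat} identifies the image equivalence classes in $\widetilde{\mathcal H}_T$ (maps from $G/K$) with the irreducible unitary representations of $G$. The corresponding subcategory for $G/L$ is likewise identified with irreducible representations. It therefore suffices to check that $\delta$ and $\gamma$ are compatible with these two identifications, i.e. that the following triangle commutes: start from $W$, form the induced map from $G/K$, apply $\delta$, and compare with the induced map from $G/L$ attached to the same $W$.

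\textbf{Step 1: $\delta$ produces a map of the right type.} Let $f:G/K\to Gr_p(W)$ be an EH holomorphic map satisfying the gauge condition for an irreducible $V=\mathcal O_K(\sum_i k_i\varpi_i)$.
\begin{itemize}
\item By the rigidity in Theorem \ref{subcat}, after image equivalence we may take $W=H^0(G/K;V)$ and $f$ to be the map induced by $(V,W)$.
\item Apply $\delta$ to obtain $\widetilde V=\mathcal O_L(\sum_i k_i\varpi_i)$, which is irreducible and homogeneous.
\item By Bott–Borel–Weil, $H^0(G/L;\widetilde V)=W$ as $G$-modules, and this space is nonzero.
\item An irreducible homogeneous bundle with nonzero sections is globally generated, since the image of evaluation at the origin is a nonzero $L$-submodule of an irreducible module. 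So $\widetilde V$ induces a map $\widetilde f:G/L\to Gr_{\widetilde p}(W)$ with $\widetilde p=\dim W-\operatorname{rank}\widetilde V$.
\item $\widetilde f$ is full and satisfies the gauge condition for $\widetilde V$ with its invariant metric; this is the construction underlying $\alpha_T$ in \cite{NaCat}, applied to $G/L$.
\item By Kobayashi's theorem, $\widetilde V$ is Einstein–Hermitian, so $\widetilde f$ is EH.
\end{itemize}

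\textbf{Step 2: the same argument for $\gamma$.} Starting from $\widetilde f:G/L\to Gr_{\widetilde p}(W)$ satisfying the gauge condition for an irreducible $\widetilde V$, the construction $\gamma$ gives $V=\mathcal O_K(\sum_i k_i\varpi_i)$, the lowest-weight $K$-component. The Ward correspondence again gives $H^0(G/K;V)=W$, and the rest of Step 1 goes through verbatim.

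\textbf{Step 3: well-definedness on image equivalence classes.} Since both bundles have the same lowest weight data $(k_i)$, the formulas $\delta(\mathcal O_K(\sum_i k_i\varpi_i))=\mathcal O_L(\sum_i k_i\varpi_i)$ and its converse show that $\gamma\circ\delta$ and $\delta\circ\gamma$ are the identity on bundles. It remains to show the constructions respect image equivalence.
\begin{itemize}
\item If $f_2=\phi\circ f_1$ with $\phi\in SU(W)$, then $f_1^*Q\cong f_2^*Q$ as Hermitian holomorphic bundles, so both maps carry the same bundle $V$.
\item By \cite{NaCat}, maps with the same bundle differ by the unitary transformation intertwining the two identifications of $W$ with $H^0$. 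Applying this on $G/L$ shows that $\widetilde f_1$ and $\widetilde f_2$ are also image equivalent.
\item The converse direction is symmetric.
\end{itemize}
Hence $\delta$ and $\gamma$ descend to mutually inverse bijections between the two sets of image equivalence classes.

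\textbf{Main obstacle.} The delicate point is justifying that the Ward isomorphism is an isomorphism of $G$-modules compatible with evaluation. Concretely, one must check that a section $t\in W$ vanishes on the whole fibre $\pi^{-1}(x)$ exactly when its image vanishes in $\widetilde V_x$. This would also identify $\widetilde f(x)=\bigcap_{y\in\pi^{-1}(x)}f(y)$, making the transform geometrically explicit. I would derive it directly from the definition $\widetilde V_x=H^0(\pi^{-1}(x);V|\pi^{-1}(x))$, with the isomorphism given by restricting sections to fibres. Irreducibility of $W$, via Schur's lemma, then ensures that the identification is unique up to a scalar, and hence unitary after normalization.
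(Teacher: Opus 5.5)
Your proposal is correct and follows the paper's route. The paper's argument is the same one-line deduction: $\delta$ and $\gamma$ preserve the lowest-weight labels $\mathcal O_K(\sum_i k_i\varpi_i)\leftrightarrow\mathcal O_L(\sum_i k_i\varpi_i)$, the Bott--Borel--Weil/Ward correspondence gives the same $W$ on both sides, and Theorem \ref{subcat} applied on $G/K$ and on $G/L$ turns this into a bijection of image equivalence classes; you supply the routine checks the paper leaves implicit (fullness, global generation, gauge condition, well-definedness).

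The point you flag as the main obstacle is actually immediate. By definition of the direct image, $H^0(G/L;\pi_*V)=H^0(G/K;V)$, and evaluation at $x$ is restriction of sections to $\pi^{-1}(x)$, which also yields your formula $\widetilde f(x)=\bigcap_{y\in\pi^{-1}(x)}f(y)$.
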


\subsection{Functoriality}
A categorical description of the
construction of full holomorphic maps, satisfying the gauge condition for semi-positive homogeneous vector bundles, of flag manifolds into complex Grassmannians
has been given in \cite{NaCat} (see also \S 3.4).
In this category, a morphism is a $G$-equivariant bundle isomorphism. 
In the light of this description, the direct image sheaf and the inverse construction provide functors between
the appropriate categories of maps that will be denoted by the same symbols $\delta$ and $\gamma.$

Let 
\[
\phi:V_1\to V_2
\] 
denote a $G$-equivariant bundle isomorphisms on $V_{i}\to G/K$
$(i=1,2).$ Then, $\phi$ induces an intertwining operator 
\[
\Phi:H^0(G/K;V_1)\to H^0(G/K;V_2)
\] 
which is an isomorphism. Suppose that we obtain 
\[
\tilde{V}_i\to G/L
\] 
by the direct image sheaf construction. Since 
\[
H^0(G/K;{V}_{i})=H^0(G/L;\tilde{V}_{i})
\] 
by the construction, the evaluation map 
\[
ev_{i}:H^0(G/L;\tilde{V}_{i})\to \tilde{V}_{i}
\] 
and its adjoint bundle map provide with a $G$-equivariant
bundle  isomorphism 
\[
ev_2 \Phi ev_1^{\ast}:\tilde{V}_1\to \tilde{V}_2.
\]

Conversely, let 
\[
\tilde{\phi}:\tilde{V}_1\to \tilde{V}_2
\] 
be a $G$-equivariant
bundle isomorphism on $\tilde{V}_{i}\to G/L$ $(i=1,2).$ In a similar way, we
obtain a $G$-equivariant bundle isomorphism using the evaluation maps and the induced intertwining operator on the cohomologies.

These observations about functors, together with Theorem \ref{sheaf}, directly imply 

\begin{thm}
  Let $G$ be a compact, connected, simply connected, semi-simple Lie group, and let $T$ be a standard subtorus in $G.$
  Denote by $\mathcal H_T$ the category  of full maps 
\[
f:G/C(T)\to Gr_p(\mathbf{C}^n)
\] 
satisfying the gauge condition for 
some semi-positive homogeneous vector bundle $\oplus_i(V_i,h_i)\to G/C(T)$.  
%$W=H^0(G/C(T);\oplus_i(V_i,h_i)).$

  If $K=C(T_K)$ and $L=C(T_L)$ denote centralisers of appropriate standard tori $T_K,T_L\subset G,$
  % in subsections \S 4.1-4.2,
  then the direct image sheaf construction $\delta$ and the inverse construction $\gamma$ give a bijective
  functor 
\[
\beta: {\mathcal H_{T_K}}\to{\mathcal H_{T_L}},
\] 
such that the following diagram is commutative
\[
  \begin{tikzcd}
    &   {\mathcal H_{T_{K\cap L}}}  \arrow{rdd}{\delta} &   \\
    &                                                                        &   \\
    {\mathcal H_{T_K}}  \arrow{rr}[swap]{\beta} 
                                          \arrow{ruu}[]{\gamma}  &                   & {\mathcal H_{T_L}} 
  \end{tikzcd}
\]
\end{thm}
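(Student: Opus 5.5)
The plan is to define $\beta:=\delta\circ\gamma$ and check three things: that $\gamma$ and $\delta$ are well-defined functors between the relevant categories, that each is bijective on objects and morphisms, and hence that $\beta$ is a bijective functor making the diagram commute by construction. Here $K\cap L=C(T_{K\cap L})$, where $T_{K\cap L}$ is the standard torus generated by $T_K$ and $T_L$. Its crossed-node set is $\mathcal T_K\cup\mathcal T_L$, so $G/(K\cap L)$ fibres over both $G/K$ and $G/L$ with flag-manifold fibres $K/(K\cap L)$ and $L/(K\cap L)$. This puts us in the setting of \S4.1–4.2 for both projections.

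\textbf{Objects.} By the main theorem of \cite{NaCat} (the isomorphism $\alpha_T:\mathcal R_G\to\mathcal H_T$), an object of $\mathcal H_{T_K}$ is determined by a unitary $G$-module $W$, equivalently by its semi-positive homogeneous bundle $\oplus_i(V_i,h_i)$ with $H^0=W$. I would apply $\gamma$ summand by summand. If $V_i=\mathcal O_K(\sum_j k_j\varpi_j)$, then $\gamma(V_i)=\mathcal O_{K\cap L}(\sum_j k_j\varpi_j)$, and $\delta$ of that over $G/L$ is $\mathcal O_L(\sum_j k_j\varpi_j)$. By the Ward correspondence $H^0(G/K;V)=H^0(G/L;\widetilde V)$, all three bundles have the same space of sections, the irreducible module determined by the weights via Bott–Borel–Weil. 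Summing over $i$ preserves $W$, so the induced maps are again full, satisfy the gauge condition, and the bundles are semi-positive. This last point holds because for each flag the semi-positive bundle with $H^0=W$ is the unique one produced by $\alpha_T$. Consequently, on objects we have $\beta=\alpha_{T_L}\circ\alpha_{T_K}^{-1}$, and Theorem \ref{sheaf} (applied twice, once per fibration) gives the irreducible/Einstein–Hermitian case.

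\textbf{Morphisms.} Given a $G$-equivariant isomorphism $\phi:V_1\to V_2$ over $G/K$, let $\Phi$ be the induced intertwiner on $H^0$. Following \S4.3, the image morphism over $G/(K\cap L)$ is $ev_2\Phi ev_1^*$, and likewise over $G/L$. I must check that this composite is a bundle isomorphism, preserves the Hermitian holomorphic structure, and is functorial. Functoriality, $\Phi_{\psi\phi}=\Phi_\psi\Phi_\phi$, should follow because $ev_i$ restricted to $(\ker ev_i)^\perp$ is a fibrewise isometry under the gauge condition, so $ev_i ev_i^*$ acts as the identity on $V_i$. Bijectivity on morphisms follows because both categories' morphism sets identify with intertwining isomorphisms of $W$ via the $\alpha$'s.

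The main obstacle will be that $ev_2\Phi ev_1^*$ really is a $G$-equivariant isomorphism respecting the chosen direct-sum holomorphic structure when the bundles are reducible. My approach is to reduce to irreducible summands using Schur's lemma: $\Phi$ maps isotypic components to isotypic components, and on each irreducible summand $ev$ is determined up to scalar by the lowest-weight line, which is preserved by both $\gamma$ and $\delta$. Commutativity of the diagram then holds by definition of $\beta$.
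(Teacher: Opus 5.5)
Your proposal is correct and follows essentially the paper's route. You set $\beta=\delta\circ\gamma$ through $G/(K\cap L)$, handle objects by the weight-preserving constructions together with the Ward/Bott--Borel--Weil identification of $W$ (Theorem \ref{sheaf} for each irreducible summand), and handle morphisms by $ev_2\Phi ev_1^*$; your factorisation $\beta=\alpha_{T_L}\circ\alpha_{T_K}^{-1}$ through $\mathcal R_G$ simply makes explicit what the paper's one-line argument leaves implicit.

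One small correction concerns functoriality of $\phi\mapsto ev_2\Phi ev_1^*$ on the new base. It does not follow from $ev_i ev_i^*=Id$ alone. It comes from $ev_3\Psi(\ker ev_2)=0$ pointwise, which is exactly your Schur/lowest-weight kernel-preservation argument. Alternatively, define $\beta$ on morphisms via the isomorphisms $\alpha_T$, and functoriality becomes automatic.
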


\subsection{A first example}
Let us give a brief example. Consider the $\mathrm P^1(\mathbf C)$-fibre bundle of flag manifolds
\[
\pi: G/K\to G/L
\] 
where 
\[
G/K=\frac{SU(3)}{U(1)\times U(1)}= F_{1,2}(\mathbf C^3)= F_{SU(3)}(\alpha_1,\alpha_2) = 
\dynkin[
labels={,},scale=1.8]A{XX}
,\]  
\[
G/L=\frac{SU(3)}{U(2)}=\mathrm P^2(\mathbf C)= F_{SU(3)}(\alpha_1)=
\dynkin[
labels={,},scale=1.8]A{X*},
\]
each flag $F$ equipped with an irreducible homogeneous vector bundle as follows: for $G/K$ consider
\[
\mathcal O(2\varpi_2)\to G/K,\quad 
 \text{or}\quad  \dynkin[labels*={,2},scale=1.8]A{XX}.
\] 
This is the homogeneous line bundle with total space 
\[
G\times_K \mathbf C_{-2\varpi_2}
\]
the standard fibre being the one-dimensional $K$-module $\mathbf C_{-2\varpi_2}.$ By Bott-Borel-Weil theorem 
\[
W=H^0(G/K;\mathcal O_K(2\varpi_2))
\] 
is the $G$-module of lowest weight $2\varpi_2,$ that is, 
\[
\dynkin[
labels*={,2},scale=1.8]A{**},
\] 
and the construction above leads to
an Einstein-Hermitian holomorphic map into 
\[
Gr_{\dim W-1}(W)=\mathrm P(W^*).
\]
The direct image sheaf construction produces an irreducible homogeneous vector bundle over $G/L,$ possibly of higher rank,
which corresponds diagrammatically to 
\[\dynkin[%labels*={,},
labels*={,2},scale=1.8]A{X*},
\] 
and which will be denoted by 
\[
\mathcal O(2\varpi_2)\to G/L.
\]
By the direct image sheaf construction, the standard fibre of $\mathcal O(2\varpi_2)\to G/L$ is the space of holomorphic sections
of $\mathcal O_K(2\varpi_2)|_{L/K}$ where $L/K$ stands for the $\mathrm P^1(\mathbf C)$ fibre. This bundle is not other than $\mathcal O(2)\to \mathrm P^1,$ with standard fibre the $U(1)$-module $\mathbf C_{-2}$ (where the subindex $-2$ refers to the weight of the representation), and 
 the space of holomorphic sections given
by the $SU(2)$-module 
\[
S^2\mathbf C^2,\quad \text{or} \quad \dynkin[
labels*={2},scale=1.8]A{*}.
\]
The space of holomorphic sections $W$ of $\mathcal O(2\varpi_2)\to G/L$ is again 
\[
\dynkin[labels*={,2},
labels={,},scale=1.8]A{**},
\] 
but now the evaluation homomorphism at a given point
is a mapping 
\[
W\to S^2\mathbf C^2.
\]
In this particular case, these dimensions are easily obtained by e.g., Weyl's 
dimension formula,
and a map 
\[
\mathrm P^2(\mathbf C)\to Gr_3(\mathbf C^6)
\] 
is obtained.
The previous discussion may be summarised in the following diagram:
\[\begin{CD}
\dynkin[labels*={,2},scale=1.8]A{XX}
 @>>> &
 \mathrm P(\dynkin[labels*={,2},scale=1.8]A{**})\\
 % \mathbf P(\mathbf C^{n+1*})\\ 
 @V\pi VV &  \\
  \dynkin[labels*={,2},scale=1.8]A{X*}  @>>> &  %\mathbf P(\mathbf C^{n+1*})
 Gr_{p}(\dynkin[labels*={,2},scale=1.8]A{**})\\
\end{CD}\qquad
\begin{CD}
F_{1,2}(\mathbf C^3) @>>> \mathrm P(\mathbf C^{6*})\\
@V\pi VV & \\
\mathrm P^2(\mathbf C) @>>> Gr_3(\mathbf C^{6*})
\end{CD}\]

or, following the categorical description,  

\[
  \begin{tikzcd}[row sep=0.5em,column sep=1em,cramped,cells={nodes={anchor=center}}]
    &   & \dynkin[labels={,2},scale=1.8]{A}{**} \arrow[ddddrr, "\alpha_{U(1)\times U(1)}" , shift left=1.5ex] \arrow[ddddll, "\alpha_{U(1)\times 1}" ', shift right=1.5ex]  & & \\
    & & & &  \\
 &   &                                         & & \\
 &   &                                            &  & \\
    \dynkin[labels={,2},scale=1.8]{A}{X*}  \arrow[rrrr, "\gamma" ', shift left=0.5ex] &  & &            & \dynkin[labels={,2}, scale=1.8]{A}{XX} 
\end{tikzcd}
\]
that is
\[\mathrm{P}(\mathbf C^3)\to Gr_3(\mathbf C^{6*}) \stackrel{\gamma}{\Longrightarrow} F_{1,2} (\mathbf C^3)\to \mathrm P(\mathbf C^{6*})\]
where  
\[
\dynkin[labels*={,2},scale=1.8]A{**}\in {\mathcal R_{SU(3)}},\quad\text{and} \quad 
\dynkin[labels*={,2},scale=1.8]{A}{XX} \in {\mathcal H_{U(1)\times U(1)}},\,\,
\dynkin[labels*={,2}, scale=1.8]{A}{x*}\in {\mathcal H_{U(1)\times 1}}.
\]
Interestingly enough, choosing $T_K,T_L$ to be $U(1)\times U(1)$
and $1\times U(1)$ respectively leads to the functorial diagram

\[
  \begin{tikzcd}[row sep=0.5em,column sep=1em,cramped,cells={nodes={anchor=center}}]
    &   & \dynkin[labels={,2},scale=1.8]{A}{**} \arrow[ddddrr, "\alpha_{1\times U(1)}" , shift left=1.5ex] \arrow[ddddll, "\alpha_{U(1)\times U(1)}" ', shift right=1.5ex]  & & \\
    & & & &  \\
 &   &                                         & & \\
 &   &                                            &  & \\
 \dynkin[labels={,2},scale=1.8]{A}{XX}  \arrow[rrrr, "\delta", shift left=0.5ex] &  & &            & \dynkin[labels={,2}, scale=1.8]{A}{*X} 
\end{tikzcd}
\]
in which the direct image sheaf functor $\delta$ 
is nothing but the Penrose transform 
(see \cite{BastonEastwood})
\[F_{1,2}(\mathbf C^3)\to \mathrm{P}(\mathbf C^{6*}) \stackrel{\delta}{\Longrightarrow} \mathrm{P}(\mathbf C^{3*}) \to \mathrm{P}(\mathbf C^{6*}) \]

Combinning both diagrams we obtain

\[
  \begin{tikzcd}[row sep=0.5em,column sep=1em,cramped,cells={nodes={anchor=center}}]
    &   &  \dynkin[labels={,2},scale=1.8]{A}{XX} \arrow[ddddrr, "\delta" , shift left=1.5ex]   & & \\
    & & & &  \\
 &   &                                         & & \\
 &   &                                            &  & \\
 \dynkin[labels={,2},scale=1.8]{A}{X*} \arrow[uuuurr, "\gamma", shift left=1.5ex] \arrow[rrrr, "\beta" ', shift left=0.5ex] &  & &            & \dynkin[labels={,2}, scale=1.8]{A}{*X} 
\end{tikzcd}
\]
\[\mathrm{P}(\mathbf C^3)\to Gr_3(\mathbf C^{6*})\stackrel{\beta}{\Longrightarrow} \mathrm{P}(\mathbf C^{3*})\to \mathrm{P}(\mathbf C^{6*})\]
\newline
From Theorems \ref{equality_holds} and \ref{subcat} we obtain the following
\begin{thm}\label{preserved_minimality}
  In the subcategory of $\mathcal H_T$ consisting of EH holomorphic maps
this transform preserves the minimality of the $L^2$ norm associated to the mean curvature operators.
\end{thm}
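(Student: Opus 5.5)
The plan is to reduce the statement to the equality case of Theorem \ref{equality_holds}, using the fact that the transforms $\delta$, $\gamma$ and $\beta$ send EH holomorphic maps to EH holomorphic maps. Take an object $f:G/K\to Gr_p(W)$ of the subcategory of EH holomorphic maps, satisfying the gauge condition for an irreducible homogeneous bundle $(V,h)=\mathcal O_K(\sum_i k_i\varpi_i)$. By Theorem \ref{equality_holds}, $f$ attains the minimum of $\int_M|A|^2dv_M$ in its homotopy class, since equality holds in the stated inequality exactly for EH holomorphic (or anti-holomorphic) maps with EH constant $-|c|$. It therefore suffices to show that the transformed map $\beta(f):G/L\to Gr_{\widetilde p}(W)$ is again an EH holomorphic map of this kind. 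Equality in Theorem \ref{equality_holds} for $\beta(f)$ then gives minimality in its own homotopy class.

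The key steps are as follows.

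\begin{enumerate}
\item Apply the direct image construction $\delta$ (or the inverse construction $\gamma$) to the irreducible bundle $\mathcal O_K(\sum_i k_i\varpi_i)$. This yields $\mathcal O_L(\sum_i k_i\varpi_i)$, which is again irreducible homogeneous.
\item By the Ward correspondence $H^0(G/K;V)=H^0(G/L;\widetilde V)$, the space of sections is the same irreducible $G$-module $W$. So $\beta(f)$ is the map induced by $(\widetilde V, W)$ and satisfies the gauge condition for $\widetilde V$.
\item By Kobayashi's theorem on homogeneous bundles, $\widetilde V$ carries a $G$-invariant Einstein--Hermitian metric, so $\beta(f)$ is EH. Equivalently, Theorem \ref{subcat} identifies $\widetilde{\mathcal H}_{T_K}$ and $\widetilde{\mathcal H}_{T_L}$ with the same irreducible representations of $G$, so $\beta$ restricts to a bijection between these EH subcategories (Theorem \ref{sheaf}).
\item Since $\beta(f)$ is holomorphic, $A=-K_{EH}=-\lambda I$ is constant.
\end{enumerate}

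The main obstacle is the sign and value of the EH constant. Equality in Theorem \ref{equality_holds} requires EH constant $-|c|$, with $c$ given by the integral of $c_1(\beta(f)^*Q)$. Since $\widetilde V$ is globally generated and semi-positive, the contraction $\lambda$ of the curvature is non-negative. Integrating the trace of $K_{EH}$ shows that $\lambda$ equals the normalized degree, which is $c$ up to the sign convention $A=-K_{EH}$. I would verify this by tracing $R^Q=-HK$ against $\omega$: this gives $q\lambda\,\mathrm{vol}=\frac{2\pi}{(m-1)!}\int c_1\wedge\omega^{m-1}$, so the constant agrees with the one prescribed. With that, equality in Theorem \ref{equality_holds} holds for $\beta(f)$, and its $L^2$ norm is minimal. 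The same argument applies to $\gamma$ and $\delta$ separately.
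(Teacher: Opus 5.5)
Your route is the paper's own. The paper proves the statement in one line from Theorems \ref{equality_holds} and \ref{subcat}, which is exactly your steps 1--3: the transforms carry $\widetilde{\mathcal H}_{T_K}$ bijectively onto $\widetilde{\mathcal H}_{T_L}$, irreducible homogeneous bundles are Einstein--Hermitian by Kobayashi, and EH holomorphic maps realise equality. Your check of the EH constant is correct, and in fact automatic: integrating $\mathrm{tr}\,K_{EH}=q\lambda$ forces $\lambda=c$ up to the sign convention. On $\widetilde{\mathcal H}_T$ your proof is fine.

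\textbf{The gap.} Your opening sentence takes an EH object of $\mathcal H_T$ and assumes its bundle is irreducible. The paper stresses right after the statement that the EH subcategory is strictly larger than $\widetilde{\mathcal H}_T$. It also contains polystable sums $\oplus_i(V_i,h_i)$ of irreducibles sharing one EH constant. Your step 3 does not extend to these, since you would need the direct images $\widetilde V_i$ to share an EH constant, and they need not.

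\textbf{A counterexample.} Take $F_{1,2}(\mathbf C^3)$ with its symmetric $G$-invariant metric, of K\"ahler class $c_1(\mathcal O(1,1))$, and put $\rho=\varpi_1+\varpi_2$.
\begin{enumerate}
\item For $i=1,2$, the EH constant of $\mathcal O(\varpi_i)$ is proportional to $\sum_{\alpha>0}(\varpi_i,\alpha)/(\rho,\alpha)=3/2$. So $\mathcal O(\varpi_1)\oplus\mathcal O(\varpi_2)$ is Einstein--Hermitian.
\item Hence the induced map into $Gr_4(\mathbf C^3\oplus\mathbf C^{3*})$ is an EH object of $\mathcal H_{T_K}$.
\item The direct image of this bundle on $\mathbf C\mathrm P^2$ is $\mathcal O(1)\oplus Q$, whose summands have slopes $1$ and $1/2$.
\item So the transformed map into $Gr_3(\mathbf C^6)$ is not EH, and equality in Theorem \ref{equality_holds} fails for it.
\end{enumerate}

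No better argument can remove the restriction. What your proof establishes, and also what the paper's appeal to Theorem \ref{subcat} supports, is the statement for $\widetilde{\mathcal H}_T$. You should state that hypothesis explicitly rather than treat it as automatic.
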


Notice that the subcategory in Theorem \ref{preserved_minimality} contains $\widetilde{\mathcal{H}}_T$ as a proper subcategory, e.g., consider the polystable case.
%%%%%%%%%%%%%%%%%%%%%%%%%%%%%%%%%
%%%%%%%%%%%%%%%%%%%%%%%%%%%%%%%%%%

\section{Transforms on projective spaces}

In the direct image sheaf construction, and in the inverse construction, we can restrict the target space to be $\mathbf C\mathrm P^n.$ 
With this
specialization we can profit from Theorem \ref{sheaf} 
to obtain some new results. 
Indeed, we will show that 
Einstein-Hermitian holomorphic maps 
of certain flag manifolds into projective space
can be obtained from 
holomorphic {\it isometric} embeddings of a different domain flag manifold if the fundamental weights defining the flags are related by an inclusion.

A remark on notation is in order. In \S 3, we used $\{\alpha_1,\alpha_2,\dots,\alpha_r\}$ to denote the simple roots
 of the Lie algebra $\mathfrak g^c.$ 
A flag manifold 
was then defined 
by specifying 
a certain subset of the set  of simple roots of $\mathfrak g^c.$ In the present and
successive sections,
though, 
\[
\{\alpha_1,\alpha_2,\dots,\alpha_r\}
\] 
will represent only the special subset of simple
roots defining the flag manifold. Due to this notational decission $r$ coincides with
the second Betti number of the flag manifold.

\begin{defn} 
  Let 
  $f$ be be a holomorphic map from a flag manifold $F$ into $\mathrm P(\mathbf C^{n+1 *})$
  or into $Gr_n(\mathbf R^{n+2}).$  
If the pull back bundle of the universal quotient bundle by $f$ is isomorphic to 
a line bundle 
\[
\mathcal O(k_1,k_2, \cdots, k_r)\to F,
\] 
then $f$ is said to have degree $(k_1,k_2,\cdots,k_{r})$.   
\end{defn}

Let $f$ be an Einstein-Hermitian holomorphic map of degree $(k_1,k_2,$ $\cdots,k_r).$
Then $f$ is an embedding if and only if 
\[
k_1,k_2,\dots,k_r
\] 
are all positive.
From now on we take the following convention: 
 when $f$ is an embedding, 
we always suppose that the K\"ahler form $\omega$ on $M$ takes the form
\[\omega=-\frac{\sqrt{-1}}{2\pi} \; R^{f^*\mathcal O(1)}\]
as in Theorem \ref{fromagag} so that $f$ is a holomorphic isometric embedding.

From Theorem \ref{sheaf} we obtain

\begin{thm}\label{52}
Let 
\[
f:F\to Gr_n(\mathbf C^{n+1})=\mathrm P({\mathbf C^{n+1}}^{\ast})
\] 
be an  Einstein-Hermitian holomorphic map of non-zero degree 
from a flag manifold $F$ into the projective space. 

 Then there exist a fibre bundle   
\[
\pi:F \to F_1
\] 
with a flag manifold $F_1$ as a base, and 
a holomorphic isometric embedding 
\[
f_1:F_1 \to \mathrm P({\mathbf C^{n+1}}^{\ast})
\]
such that 
\[
f=\pi\circ f_1.
\] 
Such a pair $(F_1,f_1)$ is uniquely determined by $f,$ modulo image equivalence. 
\end{thm}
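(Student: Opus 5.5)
Note first that the composition in the statement should be read as $f=f_1\circ\pi$, since $\pi\colon F\to F_1$ and $f_1\colon F_1\to\mathrm P({\mathbf C^{n+1}}^{\ast})$.

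The plan is to read off $F_1$ from the degree of $f$ and then apply Theorem \ref{sheaf} to the fibration $F\to F_1$. Write $F=G/K$ and $f^*\mathcal O(1)\cong \mathcal O(k_1,\dots,k_r)=\mathcal O_K(\sum_i k_i\varpi_i)$. Fullness and the existence of nonzero sections give $k_i\ge 0$, and by hypothesis not all vanish. Let $I=\{i: k_i>0\}$. Take $F_1=G/L:=F_G(\{\alpha_i: i\in I\})$, where $L\supset K$ is the centraliser of the subtorus determined by the crossed nodes in $I$. Uncrossing the nodes with $k_i=0$ enlarges $K$ to $L$, so we obtain a fibre bundle $\pi\colon G/K\to G/L$ with fibre $L/K$.

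Next we apply the direct image construction $\delta$ to $V=\mathcal O_K(\sum k_i\varpi_i)$. Since the weight $\sum_{i\in I}k_i\varpi_i$ is trivial on the semisimple part of $L$, the resulting $L$-module is one-dimensional. Hence $\widetilde V=\mathcal O_L(\sum_{i\in I}k_i\varpi_i)$ is a line bundle, and its restriction to each fibre $L/K$ is trivial. Two consequences follow.
\begin{enumerate}
\item By the Ward correspondence $H^0(G/K;V)=H^0(G/L;\widetilde V)=W$, Theorem \ref{sheaf} produces an EH holomorphic map $f_1\colon G/L\to Gr_n(W)=\mathrm P(W^*)$. Because the rank is still one, $\widetilde p=n$ and the target is still the projective space.
\item Because $\pi^*\widetilde V=V$ holomorphically and metrically (both are homogeneous line bundles with the same weight and invariant metrics), the evaluation maps agree: $\ker ev_x=\ker ev_{\pi(x)}$. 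Hence $f=f_1\circ\pi$.
\end{enumerate}
The degree of $f_1$ is $(k_i)_{i\in I}$, with all entries positive, so by the remark preceding the theorem $f_1$ is an embedding. Choosing the Kähler form $\omega=-\frac{\sqrt{-1}}{2\pi}R^{f_1^*\mathcal O(1)}$, Theorem \ref{fromagag} ((2)$\Rightarrow$(3)) shows that $f_1$ is a holomorphic isometric embedding.

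For uniqueness, suppose $(F_1',f_1')$ is another pair with $f=f_1'\circ\pi'$ and $f_1'$ an isometric, in particular injective, embedding. Then $\pi'$ must collapse exactly the fibres of $f$, so $\pi'$ and $\pi$ have the same fibres. It follows that $F_1'=G/L'$ with $L'=L$; equivalently, the uncrossed nodes must be exactly those with $k_i=0$, because $\pi'^*$ of the degree of $f_1'$ equals the degree of $f$ and $f_1'$ has all degrees positive. Then $f_1'$ and $f_1$ satisfy the gauge condition for the same irreducible line bundle over $G/L$, and Theorem \ref{subcat} (rigidity) shows they are image equivalent.

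The main obstacle is the second consequence above: verifying that $\pi^*$ of the direct image is $V$ as a Hermitian holomorphic bundle, and hence that $f$ genuinely factors through $\pi$, rather than merely that $f$ and $f_1$ correspond abstractly under Theorem \ref{sheaf}. I would handle it by comparing evaluation maps fibrewise. The sections in $W$ restricted to a fibre $L/K$ are holomorphic sections of a trivial line bundle on a compact manifold, so they are constant, and a section vanishes at $x$ if and only if it vanishes on the whole fibre through $x$.
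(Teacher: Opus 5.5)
Your proof is correct and follows essentially the paper's route. You uncross the nodes with $k_i=0$ and use $\pi^*\widetilde V=V$, so that $ev_x(\pi^*t)=ev_{\pi(x)}(t)$ and the induced map factors through $\pi$; you then conclude with Theorem \ref{fromagag} and the rigidity of Theorem \ref{subcat}. The paper invokes that rigidity explicitly to identify $f$ with the induced map of $(V,W)$, a step you leave implicit in the existence part. Your reading $f=f_1\circ\pi$ matches the paper's own proof, and your simultaneous uncrossing of all zero-labelled nodes and explicit same-fibres argument for uniqueness fill in what the paper leaves unstated, since it treats only $k_1=0$ and does not argue uniqueness.
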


\begin{proof}
  Without loss of generality, we can assume that the map is full.
If $f$ has degree 
\[
(k_1,k_2,\cdots,k_{r})\quad  \text{with} \quad k_i >0,\quad \text{for} \quad i=1,\cdots,r
\] 
then $f$ is a Kodaira embedding and 
the EH condition yields that $f$ is an isometric embedding by Thorem \ref{fromagag}. 
Thus we may take 
\[
F_1=F\quad  \text{and} \quad f_1=f.
\] 

Suppose that 
the degree of $f$ is 
\[
(k_1,k_2,\cdots,k_{r})\quad \text{with} \quad k_1=0, \quad 
\text{and} \quad k_i>0 \quad \text{for} \quad i=2,\cdots,r.
\] 
The flag manifold $F_1$ is defined by 
$G/K_{1}$, 
where $K_{1}$ is the centraliser of a torus in $\mathfrak{g}$ 
induced by 
\[
\alpha_2, \cdots \alpha_{r}.
\] 
(We remove the cross node on $\alpha_1$ in the Dynkin diagram 
corresponding to $F$.) 
Hence we obtain a fiber bundle 
\[
\pi:F \to F_1.
\] 
 
Next, the line bundle 
\[
\mathcal O(k_2, \cdots, k_r)\to F_1
\] 
is pulled back 
by $\pi$ and we get 
\[
\pi^{\ast}\mathcal O_{F_1}(k_2, \cdots, k_r)=\mathcal O_F(0,k_2, \cdots, k_r).
\] 
By the  Bott-Borel-Weil theorem, 
\[
W=H^0(F;\mathcal O(0,k_2, \cdots, k_r))=H^0(F_1;\mathcal O(k_2, \cdots, k_r)). 
\]
The evaluation maps are denoted by 
\[
ev:H^0(F;\mathcal O(0,k_2, \cdots, k_r)) \to \mathcal O(0,k_2, \cdots, k_r)
\] 
and 
\[
ev_1:H^0(F_1;\mathcal O(k_2, \cdots, k_r)) \to \mathcal O(k_2, \cdots, k_r),
\] 
respectively.
Since we have 
\[
 ev_x(\pi^*t)= ev_{\pi(x)}(t)
\] 
for $x\in F$ and $t\in H^0( F_1;\mathcal O(k_2,\cdots,k_r))$  
the 
map 
\[
f_0:F \to \mathrm P({W}^{\ast}),
\]
induced by 
\[
(\mathcal O_{F}(0,k_2,\dots, k_r),\; W)
\]
factors through $\pi$, 
in other words, 
\[
f_0=f_{01}\circ \pi,
\] 
where \[
f_{01}:F_1 \to \mathrm P({W}^{\ast})
\] 
is the map
induced by 
\[
(\mathcal O_{F_1}(k_2,k_3,\dots,k_r),W).
\]
Since $k_i>0$ for all $i=2,3,\dots,r$ the map is a holomorphic isometric
embedding by Theorem \ref{fromagag}.
 From Theorem \ref{subcat}, 
we conclude 
\[
f=f_{0}
\] 
and may take 
\[
F_1=G/K_{1}, \quad  \text{and} \quad f_1=f_{01}.
\] 
\end{proof}

The converse of the previous theorem is immediate. 

\begin{thm}\label{converse}
Let 
\[
f:F_G(\alpha_1,\alpha_2, \cdots \alpha_{r})
\to Gr_n(\mathbf C^{n+1})=\mathrm P({\mathbf C^{n+1}}^{\ast})
\] 
be a holomorphic isometric embedding. 

Then for any 
set of simple roots
$\{\alpha_1,\alpha_2, \cdots \alpha_{s}\}$ satisfying 
\[\{\alpha_1,\alpha_2, \cdots \alpha_{r}\}\subset 
\{\alpha_1,\alpha_2, \cdots \alpha_{s}\}, \]
there exists a fibre bundle   
\[\pi:F_G(\alpha_1,\alpha_2, \cdots \alpha_{s}) 
  \to F_G(\alpha_1,\alpha_2, \cdots \alpha_{r})\] 
such that  
\[
\tilde f=\pi\circ f:F_G(\alpha_1,\alpha_2, \cdots \alpha_{s}) 
\to \mathrm P({\mathbf C^{n+1}}^{\ast})
\]
is an Einstein-Hermitian holomorphic map. 
\end{thm}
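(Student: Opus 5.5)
The plan is to run the proof of Theorem \ref{52} in reverse. Note first that the composite should be read as $\tilde f=f\circ\pi$: the map first projects $F_G(\alpha_1,\dots,\alpha_s)\to F_G(\alpha_1,\dots,\alpha_r)$ and then applies $f$. This is the same abuse of notation as in Theorem \ref{52}.

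\textbf{Setting up the bundle.} Write $F=F_G(\alpha_1,\dots,\alpha_s)=G/K$ and $F_1=F_G(\alpha_1,\dots,\alpha_r)=G/L$. The crossed nodes of $F_1$ form a subset of those of $F$, so the corresponding tori satisfy $T_L\subset T_K$. Hence $K=C(T_K)\subset C(T_L)=L$, and $\pi:G/K\to G/L$ is a homogeneous fibre bundle with fibre the flag manifold $L/K$. Without loss of generality I assume $f$ is full.

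\textbf{Identifying the degree of $f$.} Since $f$ is a holomorphic isometric embedding, Theorem \ref{fromagag} (with the convention on $\omega$) shows that $f$ is an Einstein--Hermitian holomorphic map with $f^*\mathcal O(1)\cong\mathcal O_{F_1}(k_1,\dots,k_r)$ and all $k_i>0$. Set $W=H^0(F_1;\mathcal O(k_1,\dots,k_r))$. By Theorem \ref{subcat}, up to image equivalence $f$ is the map induced by $(\mathcal O_{F_1}(k_1,\dots,k_r),W)$, and $W$ is irreducible.

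\textbf{Pulling back.} Next I pull back along $\pi$:
\[
\pi^*\mathcal O_{F_1}(k_1,\dots,k_r)=\mathcal O_F(k_1,\dots,k_r,0,\dots,0),
\]
which is an irreducible homogeneous line bundle. By the Bott--Borel--Weil theorem, as in the proof of Theorem \ref{52},
\[
H^0(F;\pi^*\mathcal O_{F_1}(k_1,\dots,k_r))=W .
\]
From $ev_x(\pi^*t)=ev_{\pi(x)}(t)$ it follows that the map induced by $(\pi^*\mathcal O_{F_1}(k_1,\dots,k_r),W)$ equals $f\circ\pi$. So $f\circ\pi$ satisfies the gauge condition for an irreducible homogeneous line bundle with its invariant metric. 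By Kobayashi's theorem this bundle is Einstein--Hermitian for any $G$-invariant Kähler metric on $F$, and therefore $f\circ\pi$ is an Einstein--Hermitian holomorphic map. Equivalently, this is the inverse construction $\gamma$ of Theorem \ref{sheaf} applied to $f$.

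\textbf{Main obstacle.} The delicate step is checking the gauge condition metrically, not only holomorphically: the pulled-back Fubini--Study metric on $\pi^*f^*\mathcal O(1)$ must coincide with the $G$-invariant metric. I would argue this by $G$-equivariance. The map $f$ is $G$-equivariant with $G$ acting on $W$ unitarily, so $f^*h^Q$ is $G$-invariant, and hence so is its pullback under the equivariant map $\pi$. On an irreducible homogeneous bundle an invariant metric is unique up to a constant, and scaling by a constant does not affect the Einstein--Hermitian condition.
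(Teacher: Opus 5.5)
Your proposal is correct and follows the paper's own argument. You pull $\mathcal O_{F_1}(k_1,\dots,k_r)$ back along $\pi$ to $\mathcal O_F(k_1,\dots,k_r,0,\dots,0)$, identify the sections by Bott--Borel--Weil, and apply the inverse construction $\gamma$ (equivalently the compatibility $ev_x(\pi^*t)=ev_{\pi(x)}(t)$ from Theorem \ref{52}); your reading $\tilde f=f\circ\pi$ and your explicit check of the metric gauge condition via $G$-invariance are sound details that the paper's two-line proof leaves implicit.
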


\begin{proof}
  The line bundle 
\[
\mathcal O_{F_1}(k_1,k_2,\dots,k_r) \to F_1(\alpha_1,\alpha_2,\dots,\alpha_r), \quad k_i>0
\] 
is pulled back to 
\[
\mathcal O_{ F}(k_1,\dots,0,\dots,k_s)= \pi^*\mathcal O_{ F_1}(k_1,k_2,\dots,k_s)
\to F_G(\alpha_1,\alpha_2, \cdots \alpha_{s}), 
\]
with $r<s$. 
The inverse construction
  yields the result.
  \end{proof}

In order to clarify the  theorem, consider the following special
case: let $V\to G/K$ be the  line bundle 
\[
\mathcal O(2\varpi_2)\to F_{SU(4)}(\alpha_1,\alpha_2,\alpha_3).
\] 
This is the 
homogeneous line bundle with total space defined by 
\[
\mathcal O(2\varpi_2)=G\times_K \mathbf C_{-2\varpi_2}.
\]  
This corresponds to the diagram
\[
\dynkin[labels*={,2,},
labels={,,},scale=1.8]A{XXX}.
\]

The projection of flag manifolds 
\[\pi: G/K\to G/L
\] 
described by
uncrossing the first node is 
\[
F_{SU(4)}(\alpha_1,\alpha_2,\alpha_3)\to F_{SU(4)}(\alpha_2,\alpha_3).
\] 
The $K$-module $\mathbf C_{-2\varpi_2}$ can be regarded as an $L$-module and 
the direct
image sheaf construction gives a new  homogeneous line bundle 
\[
\mathcal O(2 \varpi_2)\to F_{SU(4)}(\alpha_2,\alpha_3)
\]
with total space   
\[
G\times_L\mathbf C_{-2\varpi_2}.
\]
Still, we could take the construction one step further and, by
uncrossing the node corresponding to $\alpha_3$ get a new
projection 
\[F_{SU(4)}(\alpha_2,\alpha_3)\to Gr_2(\mathbf C^4)
\] 
together with the irredicible homogeneous line bundle 
\[
\mathcal O(2\varpi_2)\to Gr_2(\mathbf C^4).
\]
This tower of flags, each equipped with the appropriate  homogeneous line bundle can be described by the diagram
\[\begin{CD}
\dynkin[labels*={,2,},
labels={,,},scale=1.8]A{XXX}
 @>\pi_1>>  
 \dynkin[labels*={,2,},
labels={,,},scale=1.8]A{*XX} 
@>\pi_2>>
\dynkin[labels*={,2,},
labels={,,},scale=1.8]A{*X*} 
\end{CD}\]
By general considerations on the direct image sheaf construction the space of holomorphic sections of the aforesaid bundles are isomorphic, and by the Bott-Borel-Weil theorem 
\[
W=H^0(F,\mathcal O(2\varpi_2)),
\]
where $F$ stands for any of the flags in the tower, is isomorphic to the irreducible $G$-module of highest weight $2\varpi_2,$ that is,
\[
\dynkin[labels*={,2,},
labels={,,},scale=1.8]A{***}.
\] 
In each case, the induced map by 
\[
(\mathcal O_F(2\varpi_2)),W)
\]
gives an Einstein-Hermitian holomorphic map  
\[
F\to Gr_{\dim W - 1}(W) = \mathrm{P}(W^*).
\]
This correspondence is expressed in diagrammatic form as follows:
\[\begin{CD}
\dynkin[labels*={,2,},
labels={,,},scale=1.8]A{XXX}
 @>\tilde f>> &
 \mathrm P(\dynkin[labels*={,2,},
labels={,,},scale=1.8]A{***})\\
 @V\pi_1 VV & @| \\
  \dynkin[labels*={,2,},
labels={,,},scale=1.8]A{*XX} @>>> &  
 \mathrm P(\dynkin[labels*={,2,},
labels={,,},scale=1.8]A{***})\\
@V\pi_2 VV & @| \\
\dynkin[labels*={,2,},
labels={,,},scale=1.8]A{*X*} @>> f > &  
 \mathrm P(\dynkin[labels*={,2,},
labels={,,},scale=1.8]A{***})\\
\end{CD} 
\qquad
\qquad
\begin{CD}
F_{1,2,3}(\mathbf C^4) @>\tilde f>> &
 \mathrm P(\mathbf C^{20*})\\
 @V\pi_1 VV & @| \\
F_{1,2}(\mathbf C^4)@>>> &  
\mathrm P(\mathbf C^{20*})\\
@V\pi_2 VV & @| \\
Gr_2(\mathbf C^4)@>> f > &  
 \mathrm P(\mathbf C^{20*})\\
\end{CD}
\]
or also,
\[
  \begin{tikzcd}[row sep=0.5em,column sep=1em,cramped,cells={nodes={anchor=center}}]
    &   & \dynkin[labels={,2,},scale=1.8]{A}{***} \arrow[dddd, "\alpha_{T'} " , shift left=0ex]   \arrow[ddddrr, "\alpha_{T''} " , shift left=1.5ex] \arrow[ddddll, "\alpha_T " ', shift right=1.5ex]  & & \\
    & & & &  \\
 &   &                                         & & \\
 &   &                                            &  & \\
  \dynkin[labels={,2,},scale=1.8]{A}{xxx}  \arrow[rr, "\beta", shift left=0.5ex] &  & \dynkin[labels={,2,},scale=1.8]{A}{*xx} \arrow[ll, "\beta^{-1}", shift left=0.5ex]   \arrow[rr, "\beta", shift left=0.5ex] &            & \dynkin[labels={,2,}, scale=1.8]{A}{*x*} \arrow[ll, "\beta^{-1}", shift left=0.5ex] 
\end{tikzcd}
\]
 where $\alpha_S$ are functors ${\mathcal R_{SU(4)}}\to {\mathcal H_S}$ for $S=\{T,T',T''\}$ with  
\begin{align*}
&T=U(1)^3,\\ 
&T'=S\left(U(2)\times U(1)\times U(1)\right), \\ 
&T''=S\left(U(2)\times U(2)\right).
\end{align*}

\section{Moduli spaces of Einstein-Hermitian holomorphic maps into quadrics}
We obtain a non-discrete moduli space when the target is a quadric, but we still obtain the tower of transforms even in this case.
To do this we need a generalization of do Carmo-Wallach theory.
Detailed accounts of the generalisation of the theory of do Carmo and Wallach can be found in \cite{Na13, Na15}, where the interested reader will find complete proofs of the central results.  Abridged versions, suitable to grasp the essentials, can be found in \cite{MNT, MacNag}, \S 2, and \cite{MN22, MNxx} \S 2.\\
Based on the theory developed in the aforesaid articles, the following proposition characterises full holomorphic maps from a  compact K\"ahler manifold $M$ into
quadrics as the maps induced by a Hermitian line bundle  $L \to M,$ and its space of holomorphic sections.
By restricting the target space to quadrics, Theorem 2.1 of \cite{MNxx} can be simpified, and now requires a single equation when two equations were previously
necessary (cf \cite{MNxx}). This new version of the theorem simplifies our argument drastically, avoiding the heavy representation theoretic computations previously needed.
%%%%%%%%%%

When we consider a quadric, it can be identified with 
 the Grassmannian $Gr_n(\mathbf R^{n+2})$ of 
oriented $n$-planes in an oriented $\mathbf R^{n+2}$. 
Then the universal quotient bundle 
\[
Q \to  Gr_n(\mathbf R^{n+2})
\]
has an orientation and a fibre metric $g_Q$.   
Since $Q \to  Gr_n(\mathbf R^{n+2})$ is of real rank two, 
it is recognised as a Hermitian holomorphic line bundle 
with a Hermitian metric $h_Q$ such that $g_Q=\text{Re}\,h_Q$.

A mapping into a quadric $f : M \to Gr_n (\mathbf R^{n+2} )$ is said to be {\it full} if the induced linear
map 
\[
\mathbf R^{n+2} \to \Gamma (f^*  Q)
\] 
is a monomorphism. To state the theorem, we introduce a new equivalence relation on induced maps. 

\begin{defn}
Let $f_1$ and $f_2:M\to Gr_n(\mathbf R^{n+2})$
be smooth maps.
Then $f_1$ is said to be {\it gauge equivalent} to $f_2$,
if there exists an isometry 
$\psi\in O(n+2)$
of $Gr_n(\mathbf R^{n+2})$
such that $\psi$ provides with a bundle isomorphism
\[
f_{1}^{\ast}Q \to f_{2}^{\ast}Q.
\]
This means that we have a linear isomorphism
\[
\psi:Q_{f_1(x)} \to Q_{f_2(x)}
\]
for any $x \in M$,  
where $Q_{f_i(x)}$ are considered as subspaces of $\mathbf R^{n+2}$.  
\end{defn}

\begin{rem}
If $f_1$ and $f_2$ are gauge equivalent, then we have 
\[
f_2(x)=\psi f_1(x),
\]
where $f_{i}(x)$ are regarded as subspaces of $\mathbf R^{n+2}$ for any $x \in M$.
This relation is called the {\it image equivalence} in the original do Carmo-Wallach theory
\cite{DoC-Wal}.
\end{rem}

\begin{thm}\label{HGenDWI}
Let $(L,h_L)$ be a Hermitian holomorphic line bundle over 
a compact K\"ahler manifold $M.$ 
We regard $L$ as an oriented real vector bundle 
with a metric $g_{L}=\text{Re}\,h_L.$  
Let $W$ denote  the space of holomorphic sections of $L$ 
equipped with the $L^2$ Hermitian inner product. 
We regard $W$ as a real vector space with the $L^2$ inner product $(\cdot,\cdot)_W$ and the orientation 
induced by the complex structure. 

Let 
\[
f:M \to Gr_n(\mathbf R^{n+2})
\] 
be a full holomorphic map into 
a quadric with the metric of Fubini-Study type 
satisfying the gauge condition{\rm :} 

\noindent{\rm (i)} 
The pull-back bundle 
$(f^{\ast}Q, f^{\ast}g_Q) \to M$  
is isomorphic to $(L,g_L)$ as an oriented vector bundle with fibre metric. 

Then we have a positive semi-definite symmetric endomorphism 
\[
T\in \text{\rm End}\,(W)
\]
satisfying the following three conditions{\rm :}

\noindent {\rm (I)} The vector space $\mathbf R^{n+2}$ is a subspace of $W$ with the inclusion 
$\iota:\mathbf R^{n+2} \to W$ which preserves inner products 
and 
$L\to M$ is globally generated by $\mathbf R^{n+2}$.

\noindent {\rm (II)} 
As a subspace, $\mathbf R^{n+2}=\text{\rm Ker}\,T^{\bot}$ and 
the restriction of $T$ to $\mathbf R^{n+2}$ is a positive symmetric endomorphism. 

\noindent {\rm (III)} 
The endomorphism $T$ satisfies 
%%%%%%%%%%%%%%%%%%%%%%%%%%% DW 2 %%%%%%%%%%%%%%%%%%%%%%%%%%%%%%%%%
\begin{equation}\label{HDW 4}
ev \circ T^2 \circ ev^{\ast}=Id_L, 
\end{equation}
where $ev:\underline{W}\to L$ is the evaluation map and $ev^{\ast}$ is its adjoint map.

If $\iota^{\ast}:W \to \mathbf R^{n+2}$ denotes the adjoint linear map of 
$\iota:\mathbf R^{n+2} \to W$, 
then $f:M \to Gr_n(\mathbf R^{n+2})$ is realized as the induced map 
by $\left(V,\mathbf R^{n+2},\iota\left(\iota^{\ast} T \iota\right)\right)${\rm :} 
%%%%%%%%%%%%%%%%%%%%%%%%%%%% DW 3 %%%%%%%%%%%%%%%%%%%%%%%%%%%%%%%%
\begin{equation}\label{HDW5} 
f\left(x \right)=\text{\rm Ker}\,(ev\circ \iota\circ \left(\iota^{\ast} T \iota\right))_{x}, 
\quad x \in M,
\end{equation}  
where the orientation of $\text{\rm Ker}\,(ev\circ \iota\circ \left(\iota^{\ast} T \iota\right))_{x}$ is given by those of $L_x$ and $\mathbf R^{n+2}$. 
Moreover, if the orientation of $\text{Ker}\,T$ is fixed, then 
we have a unique holomorphic totally geodesic embedding of 
$Gr_n(\mathbf R^{n+2})$ into $Gr_{n^{\prime}}(W)$ 
by $\iota\left(\mathbf R^{n+2}\right)=\text{\rm Ker}\,T^{\bot}$, 
where $n^{\prime}=n+\text{\rm dim}\,\text{\rm Ker}\,T$
and a bundle isomorphism   
$\left(ev \circ \iota\circ\left(\iota^{\ast} T \iota\right)\right)^{\ast}
:L\to f^{\ast}Q.$ 

Conversely, 
suppose that a vector space $\mathbf R^{n+2}$ with an inner product and an orientation, and 
a positive semi-definite symmetric endomorphism 
\[
T\in \text{\rm End}\,(W)
\] 
satisfying 
conditions {\rm (I)}, {\rm (II)} 
and {\rm (III)} are given.  
Then we have a unique holomorphic totally geodesic embedding of 
$Gr_n(\mathbf R^{n+2})$ into $Gr_{n^{\prime}}(W)$ after fixing the orientation of 
$\text{Ker}\,T$  
and 
the map 
\[
f:M \to Gr_{p}(\mathbf R^{n+2})
\] 
defined by \eqref{HDW5}
is a full holomorphic map into $Gr_n(\mathbf R^{n+2})$ 
satisfying the gauge condition {\rm (i)} with bundle isomorphism $L\cong f^{\ast}Q.$

Suppose that $f_i\;(i=1,2)$ is the map induced by the triple 
\[
(V,\mathbf R^{n+2},\iota (\iota^{\ast} T\iota)),
\]
with inclusion $\iota,$ such that $\iota (\mathbf R^{n+2}) =\ker T_i^\perp.$ Then $f_1$ is gauge equivalent
to $f_2$ if and only if $T_1=T_2.$

\end{thm}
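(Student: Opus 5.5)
We follow the standard do Carmo--Wallach scheme, adapted to a real rank-two target. For the forward direction, let $f:M\to Gr_n(\mathbf R^{n+2})$ be full and holomorphic, and fix an isomorphism $\phi:(L,g_L)\cong (f^*Q,f^*g_Q)$ of oriented bundles with fibre metric. Since $Q$ is a Hermitian holomorphic line bundle and $f$ is holomorphic, $f^*Q$ is holomorphic. Composing with $\phi^{-1}$, the projection $\pi_Q$ sends each $v\in\mathbf R^{n+2}$ to a holomorphic section of $L$. This gives a real linear map $\mathbf R^{n+2}\to W$, injective by fullness. Call it $S$, and let $S^*$ denote its adjoint for the $L^2$ inner product. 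Put $T:=(S S^{*})^{1/2}$, which is positive semi-definite and symmetric on $W$. Its kernel is $(\mathrm{Im}\,S)^\perp$, and polar decomposition $S=T\iota$ provides an isometric inclusion $\iota:\mathbf R^{n+2}\to W$ with image $\ker T^\perp$. This yields (II), together with the identity $ev\circ\iota\circ(\iota^*T\iota)=ev\circ S$.

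Condition (I) holds because $f^*Q$ is globally generated by $\mathbf R^{n+2}$, being a quotient of the trivial bundle. For (III), observe that at $x\in M$ the map $ev_x\circ S$ is precisely $\phi^{-1}\circ\pi_Q$ restricted to $f(x)^\perp\cong Q_{f(x)}$. Since $\pi_Q$ is an orthogonal projection and $\phi$ is an isometry, $(ev\circ S)(ev\circ S)^*=Id_L$. Substituting $SS^*=T^2$ gives \eqref{HDW 4}. Formula \eqref{HDW5} follows because $f(x)=\ker\pi_Q(x)=\ker(ev\circ S)_x$, and orientations are matched by choosing $\phi$ orientation-preserving. The totally geodesic embedding $Gr_n(\mathbf R^{n+2})\to Gr_{n'}(W)$ is $U\mapsto U\oplus\ker T$, with the orientation of $\ker T$ fixed; it is holomorphic and totally geodesic as a standard subgrassmannian.

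For the converse, given $T$ and $\iota$ satisfying (I)--(III), define $f$ by \eqref{HDW5}. Condition (I) makes $ev\circ\iota$ surjective, and $\iota^*T\iota$ is invertible by (II), so $f(x)$ has codimension two and $f$ is well defined. By (III) the map $(ev\circ\iota\circ\iota^*T\iota)^*$ is a fibrewise isometry from $L$ onto $f(x)^\perp=Q_{f(x)}$, giving the gauge condition (i) metrically. Holomorphicity of $f$ and of the bundle isomorphism comes from expressing $f$ as the composite of the map induced by $(L,W)$ into $Gr_{n'}(W)$ with the identification $\ker T\oplus(\cdot)$. The key point here is that (III) is equivalent to the second fundamental form of $L\subset\underline W$ (twisted by $T$) being of type $(1,0)$, as in \cite{Na15}. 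Fullness then follows from injectivity of $\iota^*T\iota$.

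Finally, suppose $f_1$ and $f_2$ are gauge equivalent via $\psi$. Transporting through $\phi$, the associated maps satisfy $S_2=S_1\psi^{-1}$. Hence $S_2S_2^*=S_1S_1^*$ because $\psi$ is orthogonal, and therefore $T_1=T_2$. Conversely, $T_1=T_2$ forces $S_1$ and $S_2$ to differ by an element of $O(n+2)$, and that element gives the gauge equivalence. The main obstacle is the converse holomorphicity step: one must check that the single equation (III) suffices, which earlier required two equations in \cite{MNxx}. The plan is to exploit that $Q$ has real rank two, so that the $(0,1)$-part condition is automatic once the metric identity (III) holds, with the complex structure on $Q$ determined by the orientation.
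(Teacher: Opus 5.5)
Your forward direction (the polar decomposition $S=T\iota$, and (III) from $(ev\circ S)(ev\circ S)^*=Id_L$) and your gauge-equivalence argument are fine, with one caveat. An orientation-preserving isometry $\phi$ of oriented rank-two bundles is automatically $\mathbf C$-linear, but it need not be holomorphic. So for $\phi^{-1}\pi_Q(v)$ to lie in $W$ you must read (i) as an isomorphism of Hermitian holomorphic line bundles, as in the paper's definition of the gauge condition.

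The genuine gap is in the converse. You never show that the map defined by (HDW5) is holomorphic; you flag this yourself as the main obstacle, and the mechanism you sketch does not work. Composing $f$ with $U\mapsto\iota(U)\oplus\ker T$ gives $x\mapsto\ker(ev_x\circ T)=T^{-1}(\ker ev_x)$. That is $T^{-1}f_0$, not the map $f_0$ induced by $(L,W)$. Since $T$ is only real-linear (e.g.\ $T^2=Id_W+C$ with $C$ anticommuting with $J$, as in the later moduli theorem), holomorphicity of $f_0$ does not pass through $T^{-1}$. Nor can (III) be \emph{equivalent} to any condition on $f$ or on the subbundle $\ker(ev\circ T)$, such as a type condition on a second fundamental form. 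Replacing $T$ by $cT$ with $c>0$ leaves $f$ unchanged but destroys (III) for $c\neq 1$; (III) is a normalisation, not a property of the map.

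What closes the gap is a pointwise argument.
\begin{itemize}
\item Put $S=\iota(\iota^*T\iota)=T\iota$. Since $\iota\iota^*$ is the projection onto $\ker T^\perp$, we have $SS^*=T^2$, so (III) says $A_xA_x^*=Id$ for $A_x=ev_x\circ S$.
\item Take a local holomorphic frame $\sigma$ of $L$ and an orthonormal basis $e_j$ of $\mathbf R^{n+2}$, and write $(Se_j)(x)=a_j(x)\sigma(x)$. Then $a=(a_1,\dots,a_{n+2})$ is holomorphic and nowhere zero.
\item $A_xA_x^*$ equals $|\sigma|^2$ times the Gram matrix of $\mathrm{Re}\,a,\mathrm{Im}\,a$, so (III) forces $\sum_j a_j^2=0$.
\item Hence the oriented plane $f(x)^\perp=\mathrm{span}(\mathrm{Re}\,a,\mathrm{Im}\,a)$ is the point $[a(x)]$ of the quadric $\{\sum z_j^2=0\}$, with the orientation convention that makes the standard map holomorphic. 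Therefore $f$ is holomorphic.
\item The restriction of $A_x$ to $f(x)^\perp$ is an orientation-preserving isometry onto $L_x$, hence $\mathbf C$-linear. It sends the holomorphic sections $\pi_Q(v)\circ f$ to the holomorphic sections $Sv$, so $L\cong f^*Q$ as Hermitian holomorphic line bundles.
\end{itemize}

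This is exactly where the paper's one-line proof applies. The equation dropped relative to the earlier two-equation version is the \emph{connection} condition, not a $(0,1)$-type condition as your closing plan suggests. It becomes redundant because, in rank two, agreement of the real metrics together with compatible complex structures gives agreement of the Hermitian metrics. The Hermitian connection of a Hermitian holomorphic bundle is then unique, so the connections agree automatically.
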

%%%%%%%%%%%%%%%%%%%%%%%%%%%%%%%%%%%%%%%%%%%%%%%%%%%%%%%%%%%%%%%%%%%%%%
\begin{proof}
The proof is almost the same as the proof of Theorem 5.9 in \cite{Na15}. 
The only difference is that we need only one equation in the condition (III). 

Identifying $f^{\ast}Q \to M$ and $L\to M$, we see from \eqref{HDW 4} that 
$g_L=\text{\rm Re}\,h_L$ coincides with 
$g_{Q}=\text{\rm Re}\,h_{f^{\ast}Q}$. 
Since these are 
compatible with the complex structures, 
$h_{f^{\ast}Q}$ also coincides with $h_L$. 
Thus, $(f^{\ast}Q, f^{\ast}h_Q)$ is isomorphic to 
$(L,h_L)$ as a Hermitian holomorphic line bundle.  
Then the uniqueness of the Hermitian connection yields that 
$f$ satisfies the gauge condition for $(L,h_{L})$.  
\end{proof}

We can specialize Theorem \ref{HGenDWI} 
to the case where the domain manifold is 
a flag manifold $G/K$ in the way in the proof of 
Theorem 5.24 in \cite{Na13}. 
To do this, 
notice that every holomorphic line bundle on $G/K$ is a homogeneous vector bundle. 
Let 
\[
L\to G/K
\] 
be a 
holomorphic line bundle which is supposed to be an associated bundle :\ $L=G\times_K V_0$,  
where $V_0$ is a unitary $1$-dimensional 
$K$-module.  
By Schur's lemma, an invariant Hermitian metric $h$ on $L$ is unique up to a positive constant multiple. 

If $L\to G/K$ has a non-trivial holomorphic section, 
then the Bott-Borel-Weil theorem implies that the space of holomorphic 
sections of $L\to G/K$ denoted by $W$ is an irreducible unitary $G$-module 
and globally generates the bundle. 
We equip $W$ with a $G$-invariant $L^2$ Hermitian inner product. 
 We restrict the coefficient field of the complex vector bundle to $\mathbf R$ to obtain 
an {\it oriented real} vector bundle denoted by the same symbol $L\to G/K$ with 
a fibre-metric induced by the real part of $h$, where  
the orientation is given by the complex structure. 
In a similar way, 
we regard $W$ as an 
{\it oriented real} 
vector space with $G$-invariant $L^2$ inner product 
denoted by $(\cdot,\cdot)_W$.  
Then, we obtain the induced map into an oriented real Grassmannian 
$Gr_p(W)$, 
which is said to be {\it standard}, 
 where $p=\text{dim}_{\mathbf R}\,W-2$.
Hence the standard map is the composition of the induced map by $(L,W)$ 
into a projective space $\mathrm P(W^{\ast})$ and 
the totally geodesic embedding of $\mathrm P(W^{\ast})$ into $Gr_p(W)$.

With these understood, 
from Lemma 5.17 in [11], we derive 
%%%%%%%%%%%%%%%%%%%%%%%%%%%%%%%%%%%%%%%%%%%%%%%%%%%%%%%
\begin{lemma}\label{subsp} 
The $K$-module $V_0$ can be regarded as a complex subspace of $W$. 
\end{lemma}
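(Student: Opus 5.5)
We identify $V_0$ with a subspace of $W$ through the adjoint of the evaluation map at the base point $o=eK\in G/K$. Consider the evaluation homomorphism
\[
ev_o: W=H^0(G/K;L)\to L_o=V_0,
\]
which is $K$-equivariant, since the $G$-action on $W$ is $(g t)(x)=g\,t(g^{-1}x)$ and $k$ fixes $o$. Because $L$ is globally generated by $W$ (Bott--Borel--Weil), $ev_o$ is surjective, and since $V_0$ is one-dimensional it is nonzero. Its adjoint $ev_o^{*}:V_0\to W$, taken with respect to the $G$-invariant $L^2$ Hermitian inner product on $W$ and the invariant metric $h$ on $L$, is therefore an injective complex-linear map. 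It is also $K$-equivariant, because both inner products are $K$-invariant. Its image $ev_o^{*}(V_0)=(\ker ev_o)^{\perp}$ is a complex line in $W$ that is isomorphic to $V_0$ as a $K$-module, which is what we want.

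For a more concrete picture that matches Lemma 5.17 of \cite{Na13}, we can locate this line by weights. By Bott--Borel--Weil, $W$ is the irreducible $G$-module whose extremal weight is determined by the weight $-\lambda$ of $V_0$. The sections that do not vanish at $o$ are exactly those with a nonzero component in the corresponding extremal weight space, which is one-dimensional and $K$-stable. Since $W$ is irreducible and that extremal weight occurs with multiplicity one, $(\ker ev_o)^{\perp}$ coincides with this extremal weight line. In particular the $T_0$-weight matches the character of $V_0$ up to the convention for the duality. We should also check that the $K$-action on this line is exactly the character of $V_0$ and not its dual. Using $ev_o^{*}$, rather than the weight description, settles this directly, since equivariance is built into the adjoint.

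The main obstacle is bookkeeping: getting the sign and duality conventions right, namely lowest weight $-\lambda$ on the fibre versus the highest weight of $W$, and whether to use $ev_o^{*}$ or the complex-conjugate map. The equivariance of $ev_o$ and $K$-invariance of the metrics avoid this, so we would present the proof through $ev_o^{*}$ and mention the weight description only as an identification.
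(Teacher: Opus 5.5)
Your proof is correct: the $K$-equivariant surjection $ev_o\colon W\to V_0$ has an injective, complex-linear, $K$-equivariant adjoint $ev_o^{*}$, and its image $(\ker ev_o)^{\perp}$ is the required complex line in $W$. The paper gives no argument of its own beyond citing Lemma 5.17 of \cite{Na13}, and your identification is exactly the one it relies on afterwards: $U_0=V_0^{\perp}=\ker ev_o$ makes $f_0([g])=gU_0$ the induced map, and $V_0$ is later taken to be the lowest weight line of $W$, as in your second paragraph.
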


Denote by $U_0$ the orthogonal complement of $V_0$ in $W$ 
 so that $p=\text{dim}\,U_0$.
Then, the standard map denoted by 
\[
f_0: G/K\to Gr_p(W)
\] 
is explicitly written down as 
\[  f_0([g])=gU_0 \subset W,  \quad \text{for all}\,\, [g]\in G/K,\quad g \in G,
\]
 which is a $G$-equivariant map.

Next, 
$\mathrm{S}(W)$ denotes  
the set of symmetric 
endomorphisms of $W$.  
We equip $\mathrm{S}(W)$ 
with a $G$-invariant inner product 
\[
(A,B)_{_S}=\text{trace}\,AB, 
\]
for $A,B \in \mathrm{S}(W)$. 
Define a symmetric transformation $\mathrm{S}(u,v)$ for $u$, $v \in W$ as 
\[ 
\mathrm{S}(u,v):=\frac{1}{2}\left\{u\otimes (\cdot, v)_{_W} + v\otimes (\cdot, u)_{_W} \right\}.
\]
If $U$ and $V$ are subspaces of $W$, 
we define a real subspace  $\mathrm{S}(U,V) \subset \mathrm{S}(W)$  
spanned by $\mathrm{S}(u,v)$ 
 where $u\in U$ and $v\in V$. 
In a similar fashion, $G\mathrm{S}(U,V)$  
denotes the subspace of $\mathrm{S}(W)$  spanned by 
$g\mathrm{S}(u,v),$ where $g\in G.$ 

Then the equation \eqref{HDW 4} is equivalent to 
\begin{equation}\label{HDW6}
\left(T^2- I\!d_W, G\mathrm{S}(V_0, V_0)\right)_{S}=0, 
\end{equation}
and the resulting map is written as:
\[
f\left([g]\right)=\left(\iota^{\ast}T\iota \right)^{-1}\left(f_0\left([g]\right)\cap \text{\rm Ker}\,T^{\bot}\right). 
\] 
\begin{rem}
  The previuos expression will be abbreviated to 
\[
f([g])=T^{-1}f_0([g])
\] 
  under the convention that the inverse of $T$ is taken on 
  $\mathrm{Ker}\; T^\perp.$
  \end{rem}
Let 
\[
f:M\to Gr_n(\mathbf R^{n+2})
\] 
be a full EH-holomorphic  
map of a K\"ahler manifold $M$ into a quadric. 
We denote the pull-back line bundle by 
\[
L\to M,
\] 
instead of $f^{\ast}Q \to M$. 
The space of holomorphic sections of $L\to M$ is denoted by $H^0(M;L)$  
with the evaluation map $ev:\underline{H^0(M;L)} \to L$. 
To emphasize the holomorphic structure, 
we denote by $J_L$ the complex structure on $L\to M,$ and denote by $J$ the
complex structure on $H^0(M;L).$   
 From Theorem \ref{HGenDWI}, 
we deduce that $\mathbf R^{n+2}$ is a real subspace of $H^0(M;L)$. 

Suppose that $\mathbf R^{n+2}$ is a {\it complex}\, subspace of $H^0(M;L)$ 
and the inner product on $\mathbf R^{n+2}$ is compatible with the complex structure.  
Since $\mathbf R^{n+2}$ has a $\text{U}(m+1)$-structure $(2m=n)$ from the assumption, 
we have that 
\[
S(\mathbf R^{n+2})=H(\mathbf R^{n+2}) \oplus H(\mathbf R^{n+2})^\perp, 
\]
as $\text{U}(m+1)$-module. In the notation of \cite{Na15} this 
orthogonal complement $H(\mathbf R^{n+2})^\perp$
was denoted  $Sym_I(\mathbf R^{n+2}).$ 
Notice that this space has an invariant complex structure and is therefore a unitary representation 
equivalent to $S^2\mathbf C^{m+1}$. 

We need a version of Theorem 6.10 in \cite{Na15};
we can replace holomorphic isometric immersions 
by Einstein-Hermitian holomorphic maps in it, 
since a generalisation of Calabi's rigidity theorem plays an essential role in the proof 
and it is valid for Einstein-Hermitian holomorphic maps.
%%%%%%%%%%%%%%%%%%%%%%%%%%%%%%%%%%%%%%%%%%%%%%%%%%%%%%%%%%
\begin{thm}\label{quadmodcpx}
Let $(L,h)$ be a Hermitian holomorphic line bundle over a K\"ahler manifold 
$M$. 
Let 
$\mathcal M$ be the moduli space of 
full 
Einstein-Hermitian holomorphic maps 
of 
$M$ into a quadric $Gr_n(\mathbf R^{n+2})$ 
with the gauge condition for $(L,h)$ modulo gauge equivalence relation of maps.   

Suppose that  $\mathbf R^{n+2}$ is a {\it complex}\, subspace of $H^0(M;L)$ and 
the inner product on $\mathbf R^{n+2}$ is compatible with the complex structure. 
If there exists $f \in \mathcal M$ such that the evaluation map 
$ev:\underline{\mathbf R^{n+2}}\to L$ by $f$ satisfies $J_Lev=evJ$,  
then $\mathcal M$ has the induced complex structure and 
is an open submanifold of a complex  
subspace of $H(\mathbf R^{n+2})^\perp$. 
\end{thm}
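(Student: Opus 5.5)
We follow the proof of Theorem 6.10 in \cite{Na15}, using Theorem \ref{HGenDWI} as the parametrisation device. Take the distinguished map $f\in\mathcal M$ with $J_L\,ev=ev\,J$; it serves as a base point. Its evaluation map is complex linear, so $f$ factors through the induced map into $\mathrm P((\mathbf R^{n+2})^{*})$ followed by the totally geodesic embedding into the quadric. By Theorem \ref{HGenDWI}, every $f'\in\mathcal M$ corresponds to a unique positive semi-definite $T'\in S(W)$ with $\mathrm{Ker}\,T'^{\perp}=\mathbf R^{n+2}$ and $ev\circ T'^2\circ ev^{*}=Id_L$. Gauge equivalence classes therefore correspond bijectively to such $T'$. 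We may restrict $T'$ to $\mathbf R^{n+2}$ and set $C=T'^2-Id\in S(\mathbf R^{n+2})$. The defining equation becomes the linear condition $ev\circ C\circ ev^*=0$, together with the open condition $Id+C>0$.

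\textbf{Step 1 (rigidity).} Show that the admissible $C$ are orthogonal to $H(\mathbf R^{n+2})$, i.e. $C\in H(\mathbf R^{n+2})^\perp\cong S^2\mathbf C^{m+1}$. This is where the Einstein-Hermitian hypothesis replaces isometric immersion. Since $f'$ is EH and satisfies the gauge condition for the same $(L,h)$, the generalised Calabi rigidity (the $L^2$-minimality of Theorem \ref{equality_holds} and the uniqueness of Hermitian connections) forces $f'^*h_Q=h$ together with $f'^{*}\nabla^Q=\nabla^L$. Differentiating $ev\,C\,ev^*=0$ along $M$ with this connection gives the higher-order conditions $ev\circ\nabla^{k}C\circ ev^*$. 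Because $J_Lev=evJ$ for the base point, these split into $J$-linear and $J$-antilinear parts. Fullness, i.e. $\mathbf R^{n+2}$ being spanned by the jets of $ev^*$, then kills the $J$-commuting component $C_H$ of $C$. I expect this to be the main obstacle. The care needed is in checking that Na15's argument uses only the connection and metric identification, which the EH gauge condition supplies, and not the metric on $M$.

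\textbf{Step 2 (linear structure).} With $C\in H(\mathbf R^{n+2})^\perp$, the remaining constraint is $(C,\,\mathrm S(ev^*_x\ell,ev^*_x\ell))_S=0$ for all $x$ and $\ell\in L_x$. In the homogeneous case this is exactly \eqref{HDW6} restricted to $\mathbf R^{n+2}$. It is a real linear condition. Using $J_Lev=evJ$, the set $\{\mathrm S(ev^*\ell,ev^*\ell)\}$ is stable under $\ell\mapsto J_L\ell$. Since $H^\perp$ carries the complex structure $C\mapsto JC$, and a computation shows the orthogonal projection of this set to $H^\perp$ is stable under $J$, the solution space $\mathcal V\subset H(\mathbf R^{n+2})^\perp$ is a complex subspace.

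\textbf{Step 3 (openness and structure).} The map $T'\mapsto C=T'^2-Id$ is injective on positive operators, because positive square roots are unique. Hence $\mathcal M\cong\{C\in\mathcal V: Id+C>0\}$, which is an open subset of $\mathcal V$ containing $0$, the base point. Full rank follows from positivity. Conversely, each such $C$ gives $T'=(Id+C)^{1/2}$ satisfying (I)–(III), hence a full holomorphic map by Theorem \ref{HGenDWI}. That map is EH because it satisfies the gauge condition for $(L,h)$, which is Einstein-Hermitian as the pull-back of $f$'s bundle. The complex structure on $\mathcal M$ is therefore the one induced from $\mathcal V$.
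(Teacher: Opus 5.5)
Your overall route is the one the paper intends: it gives no argument of its own beyond deferring to Theorem 6.10 of \cite{Na15} and asserting that Calabi-type rigidity survives for EH maps. Steps 2 and 3 are fine, and in Step 2 the identity $ev\,(JC)\,ev^{*}=J_L\,(ev\,C\,ev^{*})$ gives $J$-invariance of the solution space directly. The gap is in Step 1, which cites the wrong input and omits the one that does the work.

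The $L^2$-minimality of Theorem~\ref{equality_holds} plays no role. The identifications $f'^{*}h_Q\cong h$ and $f'^{*}\nabla^Q=\nabla^L$ are the gauge hypothesis plus uniqueness of the Chern connection, not an output of rigidity. In fact the EH condition is not used in Step 1 at all. Nor is ``differentiate $ev\,C\,ev^{*}=0$, split into $J$-linear and $J$-antilinear parts, use that the jets of $ev^{*}$ span'' an argument as it stands. Nothing in it distinguishes $C_H$ from $C_I$, so if it worked as written it would kill $C_I$ too, and $\mathcal M$ would collapse to the base point, which is false in general. (Also, $C$ is constant, so the derivatives fall on $ev$ and $ev^{*}$, not on $C$.)

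The missing idea is polarisation. Pointwise, $ev_xC_Hev_x^{*}$ is a real multiple of $Id_{L_x}$ while $ev_xC_Iev_x^{*}$ is $J_L$-antilinear, so the two vanish separately. In a local holomorphic frame $e$, write $ev_x=\langle\cdot,\sigma(x)\rangle\,e(x)$ with $\sigma$ antiholomorphic and $\mathbf R^{n+2}$-valued. The first condition then reads $\langle C_H\sigma(x),\sigma(x)\rangle=0$. Because $C_H$ is complex linear, $(x,y)\mapsto\langle C_H\sigma(y),\sigma(x)\rangle$ is holomorphic in $x$ and antiholomorphic in $y$. Vanishing on the diagonal therefore forces it to vanish near the diagonal. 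Equivalently, $\langle C_H\,\bar\partial^{\beta}\sigma(x_0),\bar\partial^{\alpha}\sigma(x_0)\rangle=0$ for all multi-indices $\alpha,\beta$ independently. Only at this point does fullness (the $\bar\partial^{\alpha}\sigma(x_0)$ span $\mathbf R^{n+2}$ over $\mathbf C$) give $C_H=0$. For $C_I$, the quantity $\langle C_I\sigma(x),\sigma(x)\rangle$ is conjugate-bilinear in the single map $\sigma$. There is nothing to polarise, so it survives as the linear constraint defining $\mathcal V$; this is exactly why a non-trivial complex moduli remains.

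Finally, make explicit that $ev$ means $ev_f$. In the $L^2$-standard parametrisation of Theorem~\ref{HGenDWI}, the base point has some $T_f$, which commutes with $J$ by $J_L\,ev_f=ev_f\,J$ and fullness. The equation for $T'^2-Id$ is then affine, and $0$ need not correspond to $f$. Use $C=T_f^{-1}T'^2T_f^{-1}-Id$ instead. Since $C\mapsto T_fCT_f$ preserves $H(\mathbf R^{n+2})^{\perp}$, commutes with $J$, and preserves the condition $Id+C>0$, Steps 2 and 3 go through unchanged.
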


%%%%%%%%%%

As described in the general theory, certain representation
spaces will be specially relevant in the description of moduli
spaces of maps. We will use the theory of the lowest weight to identify these representations.

\begin{defn}
Let $G$ be a compact connected semisimple Lie group 
and $W$ a unitary representation space of $G$ with 
\[
-(k_1\varpi_1 +k_2 \varpi_2+\cdots 
+k_r \varpi_{r})
\]
as the {\it lowest weight}. 
We denote by $S^2W$ the symmetric power of $W$ of degree $2$. 
When $S^2W$ is irreducibly decomposed, 
$S^2W$ has a unique irreducible summand $W_1$ with  
\[
-2(k_1\varpi_1 +k_2 \varpi_2+\cdots 
+k_r \varpi_{r})
\]
as the lowest weight. 
Then the orthogonal complement of $W_1$ is denoted 
$S^2W_{0}$:
\[
S^2W=W_1\oplus_{\perp} S^2W_0. 
\] 
\end{defn}
%%%%%%%%%%%%%%%%%%%%%%%%%%%%%%%%%%%%%%%%%%%%%%%%%%

\begin{rem}
The map 
\[
\tau:Gr_n(\mathbf R^{n+2})\to Gr_n(\mathbf R^{n+2})
\]
defined by switching the orientation of the $n$-planes of $\mathbf R^{n+2}$ is
an isometry. From now on we will not make distiction between 
$f:M\to Gr_n(\mathbf R^{n+2})$ and $\tau \circ f$.  
  \end{rem}

Using all this notation, the theorem describing the moduli space
of maps is stated as follows:

\begin{thm}\label{mod}
Let $G/K$ be a flag manifold.
We denote by 
$W$ the space of holomorphic sections of 
\[
\mathcal O(k_1,k_2, \cdots, k_r)\to G/K,
\] 
which is considered as a real vector space $\mathbf R^{n+2}$ 
with the orientation induced by the complex structure and 
the inner product induced from the $L^2$ Hermitian inner product 
up to a constant multiple.  

If 
\[
f:G/K \to  Gr_m(\mathbf R^{m+2})
\]
is a full Einstein-Hermitian holomorphic map with degree $(k_1,k_2,\cdots,k_{r})$, then 
\[
m\leqq n.
\]  

We define a subset  
$\mathcal M$ of $S^2W_0$ with the induced topology by {\rm :}
\[
\mathcal M=\left\{C \in S^2W_{0}\,|\, Id_{W}+C > 0 \right\}, 
\]
where $S^2W_{0}$ is regarded as a subspace of the space of 
symmetric transformations on $W$. 
Then, 
$\mathcal M$ is 
 identified with
the moduli space 
of full Einstein-Hermitian holomorphic maps of $G/K$ into $Gr_n(\mathbf R^{n+2})$ 
with degree $(k_1,k_2,\cdots,k_{r})$ up to gauge equivalence. 

When $\overline{\mathcal M}$ denotes the closure of 
$\mathcal M$,   
every boundary point of $\overline{\mathcal M}$ 
distinguishes a subspace $\mathbf R^{p+2}$ of $\mathbf R^{n+2}$ and 
describes a full Einstein-Hermitian holomorphic map into 
$Gr_p(\mathbf R^{p+2})$ 
which can be regarded as a totally geodesic submanifold of 
$Gr_{n}(\mathbf R^{n+2})$. 

The moduli space $\mathbf M$ 
of these maps up to image equivalence 
is identified with 
the quotient space of $\overline{\mathcal M}$ by $S^1${\rm :} 
\[
\mathbf M= \overline{\mathcal M}/S^1. 
\] 
Each $C\in \overline{\mathcal M}$ represents a map \[(Id_{W}+C)^{-\frac{1}{2}}f_{0}.\]
\end{thm}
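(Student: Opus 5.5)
The plan is to feed the flag-manifold data into Theorem~\ref{HGenDWI}. A full EH holomorphic map $f$ of degree $(k_1,\dots,k_r)$ has $f^*Q\cong L:=\mathcal O(k_1,\dots,k_r)$ as Hermitian line bundles. Since $f$ is EH, the metric on $L$ is the invariant one, unique up to a constant by Schur's lemma, and we normalise it so that the standard map $f_0$ is realised with $T=Id_W$.

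\textbf{Bound on the target.} Theorem~\ref{HGenDWI} yields a positive semi-definite symmetric $T\in \mathrm S(W)$ with $\mathbf R^{m+2}=\mathrm{Ker}\,T^\perp\subset W$. This gives $m+2\le \dim_{\mathbf R}W=n+2$, that is, $m\le n$.

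\textbf{The parameter space.} Condition (III) becomes \eqref{HDW6}: $T^2-Id_W$ is orthogonal to $G\mathrm S(V_0,V_0)$. We will identify this span. By Lemma~\ref{subsp}, $V_0$ is the lowest weight line of $W$, with weight $-\sum k_i\varpi_i$. The real symmetric endomorphisms $\mathrm S(v,v)$, for $v\in V_0$, decompose under the $U(1)$ rotating $V_0$ into two parts:
\begin{itemize}
\item a part of type $v\otimes \bar v$, which is Hermitian;
\item a part of type $v\cdot v\in S^2W$, which is complex-bilinear and has lowest weight $-2\sum k_i\varpi_i$.
\end{itemize}
The $G$-span of the second part is therefore the Cartan component $W_1\subset S^2W$. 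The $G$-span of the first part is the span of the Hermitian projections onto the orbit $G V_0$, which should be all Hermitian endomorphisms compatible with the structure.

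The key step, and the main obstacle, is to show that $T^2-Id_W$ cannot have a nonzero Hermitian component. I would reduce this to Theorem~\ref{quadmodcpx} together with Calabi rigidity, in its EH version. The Hermitian part of $T^2$ encodes a map into $\mathrm P(W^*)$ satisfying the gauge condition for $L$. By the rigidity in Theorem~\ref{subcat}, this map is image equivalent to the standard one, which forces the Hermitian part of $T^2$ to be $Id_W$. Writing $T^2=Id_W+C$ with $C$ in the complement $H(W)^\perp\cong S^2W$, condition \eqref{HDW6} then says exactly that $C\perp W_1$, that is, $C\in S^2W_0$.

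\textbf{The moduli space $\mathcal M$.} Positivity of $T|_{\mathrm{Ker}\,T^\perp}$ together with fullness into $Gr_n(\mathbf R^{n+2})$ (so that $\mathrm{Ker}\,T=0$) gives $Id_W+C>0$. We then take $T=(Id_W+C)^{1/2}$, the unique positive root. Since the last clause of Theorem~\ref{HGenDWI} says gauge equivalence classes are in bijection with the $T$'s, we get $\mathcal M=\{C\in S^2W_0 : Id_W+C>0\}$. The map represented by $C$ is $T^{-1}f_0=(Id_W+C)^{-1/2}f_0$, by the formula following \eqref{HDW6}.

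\textbf{The boundary of $\mathcal M$.} If $C\in\overline{\mathcal M}\setminus\mathcal M$, then $Id_W+C\ge 0$ with nontrivial kernel. Put $\mathbf R^{p+2}=\mathrm{Ker}(Id_W+C)^\perp$. The converse part of Theorem~\ref{HGenDWI} produces a full map into $Gr_p(\mathbf R^{p+2})$, which is totally geodesic in $Gr_n(\mathbf R^{n+2})$. Conversely, any full map into a smaller quadric arises this way, with $T$ semidefinite.

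\textbf{Image equivalence.} Two gauge-inequivalent maps are image equivalent exactly when they differ by an element of $O(n+2)$ commuting with the data. These are the unitary scalars $e^{i\theta}$ acting on $W$, modulo the orientation flip $\tau$. Such a scalar acts on $S^2W_0$ by $e^{2i\theta}$, so $\mathbf M=\overline{\mathcal M}/S^1$. It remains to check that no other isometries identify maps; for this I would use $G$-equivariance and irreducibility of $W$.
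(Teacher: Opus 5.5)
Your proposal follows the paper's proof. Theorem~\ref{HGenDWI} reduces everything to \eqref{HDW6}. The Hermitian part of $C=T^2-Id_W$ is killed by Calabi-type rigidity; the paper simply invokes Theorem~\ref{quadmodcpx} for this. The lowest weight vector $v$ gives $v\cdot v$, whose $G$-span is $W_1$, so $C\in S^2W_0$. Image equivalence is then a quotient by $S^1$.

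Your explicit splitting of (III) into $J$-commuting and $J$-anticommuting parts is legitimate, because $ev$ and $ev^*$ are complex-linear. It is more careful than the paper, which compresses this into the remark that $\mathrm S(V_0,V_0)$ is spanned by $v\otimes v$. It also covers maps into smaller quadrics, where Theorem~\ref{quadmodcpx} does not literally apply: there $\mathrm{Ker}\,T^\perp$ is not a complex subspace, since a posteriori $\mathrm{Ker}\,T$ is totally real because $C$ anticommutes with $J$.

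You can avoid the detour through an auxiliary map into $\mathrm P(W^*)$, with its image-versus-gauge-equivalence subtlety. The Hermitian component $C_H$ satisfies $\langle C_H\,gv,gv\rangle=0$ for all $g\in G$. Since the cone over $G/K\subset\mathrm P(W)$ spans $W$, polarization forces $C_H=0$. A small correction at the start: what you need there is that an EH metric on a line bundle is unique up to a constant. Schur's lemma only compares invariant metrics.

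The step that would not go through as you plan it is the last one, showing that no isometry other than the $S^1$ scalars identifies maps.
\begin{itemize}
\item For $C\neq 0$ the maps $(Id_W+C)^{-1/2}f_0$ are not $G$-equivariant.
\item An isometry $\psi$ with $f_2=\psi\circ f_1$ therefore has no reason to commute with $G$.
\item So $G$-equivariance and irreducibility of $W$ give you no control over $\psi$.
\end{itemize}
The paper uses the gauge-theoretic mechanism instead. Composing $\psi$ with the two identifications $L\cong f_i^*Q$ gives an orthogonal automorphism of the real rank-two bundle $L$ that preserves the Hermitian connection. This is an element of the centraliser of the holonomy group in the structure group. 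For nonzero degree the curvature of $L$ is nonzero, so its holonomy is all of $SO(2)$ and this centraliser is $S^1$. After correcting one identification by this element, the two maps are gauge equivalent. The last clause of Theorem~\ref{HGenDWI} then shows that $T_1$ and $T_2$ differ by conjugation by $e^{i\theta}$, which is exactly the $e^{2i\theta}$-action on $S^2W_0$ you computed.
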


\begin{proof}

Since the standard map is the composition of the Kodaira embedding 
\[
G/K \to \mathrm P(W^{\ast})\] 
and the totally geodesic embedding  
\[
\mathrm P(W^{\ast}) \to Gr_{n}(\mathbf R^{n+2}),
\] 
and $W=\mathbf R^{n+2},$ it follows from
Theorem 3.2 that the standard map by 
\[
(\mathcal O(k_1,k_2,\cdots,k_r)\to G/K,W)
\] 
is a full EH holomorphic map. 
Then, Theorem \ref{quadmodcpx} yields that  $\mathcal M$ is an open submanifold
of a complex  subspace of $H(\mathbf R^{n+2})^\perp$. 
  
For any positive symmetric endomorphism $T$ on $\mathbf R^{n+2}$, 
We take a symmetric endomorphism $C$ on $\mathbf R^{n+2}$ such that 
\[
T^2=Id_{W}+C.
\] 
Then the equation \eqref{HDW6} reduces to 
\[
\left(C, G\mathrm{S}(V_0, V_0)\right)_{S}=0. 
\]
Thus we need to identify the $G$-representation subspace 
\[
G\mathrm{S}(V_0, V_0)^\perp \subset S^2(W).
\]

Since we should regard $W$ as an orthogonal $G$-module when applying
Theorem 6.1  
  we must consider an orthogonal irreducible decomposition
of the space of symmetric endomorphisms of $W$ regarded as a $G$-module. However, since
the complex structure of $H(W)^\perp$ is invariant and $G \mathrm{S}(V_0,V_0)^\perp$ is a complex subspace (by Theorem 6.3), 
we may decompose $H(W)^\perp$ 
into irreducible unitary representations.

Let $V_0$ be the one-dimensional complex subspace spanned by 
the lowest vector $v$ in $W$. 
Then $S(V_0,V_0)$ is spanned by 
\[
v \otimes v
\] and so, 
$S(V_0,V_0)$ is a subspace of $W_1$. 
We therefore obtain 
\[
GS(V_0,V_0)=W_1.
\] 

For the moduli space  modulo image equivalence we need to consider the action on $\mathcal M$ 
of the  centraliser of the holonomy group of the connection in the structure group of 
\[
\mathcal{O}(k_1,k_2,\dots,k_r)\to G/K,
\]
regarded as a real vector bundle,  i.e., $S^1.$
For more details, see \cite{MNT}, \cite{Na13} and \cite{Na15}.

Finally, from Theorem 6.1,  $C \in \mathcal M$ corresponds to 
\[
(Id_{W}+C)^{-\frac{1}{2}}f_{0},
\] 
where $f_{0}(x)$ stands for an $n$-dimensional subspace of 
$\mathbf R^{n+2}$ for each $x \in G/K$. 
\end{proof}

By virtue of Theorem \ref{fromagag}, 
we obtain the following refinement of the theorem:

\begin{cor}
  Under the same hypotheses of Theorem \ref{mod}, let $k_i>0$ for $i=1,2,\dots,r$ and suppose that the
  K\"ahler form is in the cohomology class represented by the first Chern class of the pull-back of the
  universal quotient bundle $\mathcal O(1)\to Gr_n(\mathbf R^{n+2}).$ Then the moduli space $\mathbf M$ 
of holomorphic isometric embeddings of $G/K$ into $Gr_n(\mathbf R^{n+2})$ 
with degree $(k_1,k_2,\cdots,k_{r})$ 
is identified with 
the quotient space of $\overline{\mathcal M}$ by $S^1:$ 
\[
\mathbf M=\overline{\mathcal M}/S^1. 
\] 
\end{cor}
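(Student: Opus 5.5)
Since every $k_i>0$, the line bundle $L=\mathcal O(k_1,k_2,\dots,k_r)\to G/K$ is positive, and by the Kodaira embedding theorem it is very ample. The plan is to show that, under the cohomological normalisation of the K\"ahler form, the class of maps classified in Theorem \ref{mod} coincides exactly with the class of holomorphic isometric embeddings of the given degree. The moduli description $\mathbf M=\overline{\mathcal M}/S^1$ then transfers verbatim.

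First, take a full Einstein-Hermitian holomorphic map $f$ of degree $(k_1,\dots,k_r)$. Its pull-back $f^*Q$ is isomorphic to $L$, so we may take the K\"ahler form to be $\omega=-\frac{\sqrt{-1}}{2\pi}R^{f^*\mathcal O(1)}$, which lies in $c_1(f^*\mathcal O(1))$. Theorem \ref{fromagag} ((2)$\Rightarrow$(3)) then shows that $f$ is an isometric immersion. To upgrade this to an embedding, we use the description of Theorem \ref{mod}: $f=(Id_W+C)^{-\frac12}f_0$, where $(Id_W+C)^{-\frac12}$ is a linear isomorphism on $\mathrm{Ker}\,T^\perp$. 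The standard map $f_0$ factors through the Kodaira embedding of $G/K$ by the very ample bundle $L$, so it is injective, and therefore $f$ is injective as well. This also covers the boundary points of $\overline{\mathcal M}$: for such a $C$, $f$ lands in a totally geodesic subquadric $Gr_p(\mathbf R^{p+2})$, and the composition of a projection with $f_0$ stays injective because the map is still induced by sections generating $L$.

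Conversely, let $f$ be a holomorphic isometric embedding of degree $(k_1,\dots,k_r)$ with $\omega$ in the stated class. Theorem \ref{fromagag} ((3)$\Rightarrow$(2)) shows that $f$ is Einstein-Hermitian. Restricting to the smallest totally geodesic subquadric containing the image makes $f$ full there, and Theorem \ref{mod} places it in $\overline{\mathcal M}$. Image equivalence is the same relation in both settings, so passing to the quotient by $S^1$ gives the identification.

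The main obstacle I expect is making sure the metric normalisation is consistent. Theorem \ref{mod} uses a fixed $G$-invariant metric, while Theorem \ref{fromagag} needs $\omega$ to represent $c_1(f^*\mathcal O(1))$. Kobayashi's theorem (\S3.2) resolves this: a $G$-invariant K\"ahler form in $c_1(L)$ exists, and the Einstein-Hermitian property is independent of which $G$-invariant representative is chosen. A second delicate point is injectivity at boundary points, which I would settle through the global generation in condition (I) of Theorem \ref{HGenDWI}.
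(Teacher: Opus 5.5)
Your overall route is the paper's: the paper obtains the corollary simply by passing Theorem \ref{mod} through the equivalence (2)$\Leftrightarrow$(3) of Theorem \ref{fromagag}, with the $G$-invariant normalisation of $\omega$ handled by Kobayashi's theorem. The trouble is the extra injectivity argument you give at boundary points of $\overline{\mathcal M}$.

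\textbf{The boundary step fails as argued.} At a boundary point $T=(Id_W+C)^{1/2}$ has a kernel, so $f(x)=T^{-1}\bigl(f_0(x)\cap\mathbf R^{p+2}\bigr)$ with $\mathbf R^{p+2}=\mathrm{Ker}\,T^{\perp}$. Your interior argument, composing with a linear automorphism of $W$, is therefore no longer available. Global generation in condition (I) of Theorem \ref{HGenDWI} only guarantees that $f_0(x)\cap\mathbf R^{p+2}$ is a $p$-plane for every $x$, i.e.\ that $f$ is defined everywhere. It does not make $f$ separate points. A base-point-free subsystem of a very ample system can give a non-injective, or non-immersive, map. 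For example, projecting the twisted cubic from a general point of $\mathbf C\mathrm P^3$ gives a nodal plane cubic, although the three corresponding sections still generate $\mathcal O(3)$. So the step ``still induced by sections generating $L$, hence injective'' is a genuine gap.

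\textbf{The missing ingredient is rigidity.}
\begin{enumerate}
\item Compose $f$ with the standard embedding of $Gr_p(\mathbf R^{p+2})$ as a quadric hypersurface in $\mathbf C\mathrm P^{p+1}$. The hyperplane bundle restricts to the universal quotient bundle and the Fubini--Study metric restricts to the quadric's metric. Hence the composite is a holomorphic isometric immersion into projective space for the invariant K\"ahler form in $c_1(L)$.
\item By Calabi's rigidity (equivalently, Theorem \ref{subcat} applied to the composite), it is congruent to the map $G/K\to\mathrm P(W^*)$ induced by $(L,W)$, followed by a linear totally geodesic embedding.
\item That map is an embedding because all $k_i>0$. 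This comes from Borel--Weil (the orbit of the lowest weight line); the Kodaira embedding theorem alone only gives very ampleness of some power of $L$.
\end{enumerate}
This is what underlies the paper's assertion in \S5 that an EH holomorphic map of degree $(k_1,\dots,k_r)$ is an embedding iff all $k_i>0$, which the paper uses implicitly in its one-line derivation.

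\textbf{A smaller ordering issue.} Condition (2) of Theorem \ref{fromagag} already presupposes that $f$ is an immersion, and you invoke (2)$\Rightarrow$(3) before establishing this. It follows directly from the gauge condition, at interior and boundary points alike. The pull-back of the quadric's K\"ahler form by $f$ is a constant multiple of $-\sqrt{-1}R^{(L,h)}$, the $G$-invariant form in $c_1(L)$, which is positive definite since all $k_i>0$. So $df$ is injective, and with your choice of $\omega$ this identity also shows directly that $f$ is isometric.
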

\section{Transforms on quadrics}
There are no rigidity results on quadrics, and in fact a moduli space of maps into quadrics is obtained. 
The moduli space does not depend on the isotropy subgroup of the domain flag, so it may still be possible to obtain a correspondence between moduli
spaces. This is the case.

\begin{thm}
Let   
\[
f:F \to Gr_n(\mathbf R^{n+2})
\]
be an Einstein-Hermitian holomorphic map  of 
a flag manifold $F$.     

Then there exist a fibre bundle   
\[
\pi:F \to F_1
\]
with a flag manifold $F_1$ as a base, and 
a holomorphic isometric embedding 
\[
f_1:F \to Gr_n(\mathbf R^{n+2})
\]
such that 
\[
f=\pi\circ f_1.
\] 
Such a pair $(F_1,f_1)$ is uniquely determined by $f$. 
\end{thm}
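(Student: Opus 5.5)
The plan is to mirror the proof of Theorem \ref{52}, with Theorem \ref{mod} taking the role that the rigidity statement (Theorem \ref{subcat}) played there. The statement has typos: as in Theorem \ref{52}, it should read $f_1:F_1\to Gr_n(\mathbf R^{n+2})$ and $f=f_1\circ\pi$. Assume without loss of generality that $f$ is full, and let its degree be $(k_1,\dots,k_r)$, so that $f^*Q\cong\mathcal O_F(k_1,\dots,k_r)$ with $k_i\ge 0$, because the bundle has nontrivial holomorphic sections. If every $k_i>0$, then $f$ is an embedding. After normalising the K\"ahler form as in our convention, Theorem \ref{fromagag} shows that $f$ is a holomorphic isometric embedding, and we take $F_1=F$, $f_1=f$. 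Otherwise, let $I=\{i: k_i=0\}$. Define $F_1=G/K_1$ by uncrossing the nodes $\alpha_i$ with $i\in I$, and let $\pi:F\to F_1$ be the resulting flag bundle. Then $\mathcal O_F(k_1,\dots,k_r)=\pi^*\mathcal O_{F_1}(k_j)_{j\notin I}$, all remaining $k_j$ are positive, and by Bott--Borel--Weil
\[
W=H^0(F;\mathcal O_F(k_1,\dots,k_r))=H^0(F_1;\mathcal O_{F_1}(k_j)_{j\notin I}),
\]
with $ev_x(\pi^*t)=ev_{\pi(x)}(t)$.

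Next I compare the two moduli descriptions of Theorem \ref{mod}. Both are built from the same $G$-module $W$. Both use the same lowest weight vector $v$, hence the same space $GS(V_0,V_0)=W_1$ and the same $S^2W_0$. So the parameter sets $\mathcal M\subset S^2W_0$ for $F$ and for $F_1$ coincide. The standard maps are related by $f_0=f_{01}\circ\pi$: the standard map for $F$ is $[g]\mapsto gU_0$, with $U_0$ the orthogonal complement of $V_0=\mathbf C v$, and this point depends only on the coset $gK_1$, since $K_1$ stabilises the lowest weight line of $W$. By Theorem \ref{mod}, $f=(Id_W+C)^{-1/2}f_0$ for some $C\in\overline{\mathcal M}$, unique up to the $S^1$-action. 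Now define
\[
f_1=(Id_W+C)^{-1/2}f_{01}:F_1\to Gr_n(\mathbf R^{n+2}).
\]
Then $f=f_1\circ\pi$, and Theorem \ref{mod} applied on $F_1$ shows that $f_1$ is a full EH holomorphic map of degree $(k_j)_{j\notin I}$. Since these degrees are positive, Theorem \ref{fromagag} (equivalently, the Corollary above) makes $f_1$ a holomorphic isometric embedding. If $C$ lies on the boundary, the same argument works inside the totally geodesic subquadric $Gr_p(\mathbf R^{p+2})$.

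For uniqueness, suppose $f=f_1'\circ\pi'$ with $\pi':F\to F_1'$ and $f_1'$ an embedding. Then $f^*Q=\pi'^*f_1'^*Q$ has zero labels on exactly the uncrossed fibre directions of $\pi'$, and positive labels elsewhere because $f_1'$ is an embedding. This forces $F_1'=F_1$. Surjectivity of $\pi$ then gives $f_1'=f_1$, and in any case the same $C$ appears in both moduli descriptions.

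The main obstacle I expect is the identification of the moduli data across $\pi$. One must check that $G$-invariant $L^2$ inner products and the gauge condition on $F$ and on $F_1$ correspond, which holds up to a constant that Theorem \ref{mod} already allows. One must also check that the gauge-equivalence and $S^1$ quotients match, which follows because the holonomy centraliser is $S^1$ in both cases. Finally, one must verify rigorously that $f_0$ factors through $\pi$, which requires $K_1$ to preserve $\mathbf C v$; this holds because $v$ has weight $-\sum_{j\notin I}k_j\varpi_j$, which is orthogonal to the uncrossed roots.
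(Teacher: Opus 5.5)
Your proposal is correct and follows the paper's proof. You factor the standard map as $f_0=f_{01}\circ\pi$, write $f=(Id_W+C)^{-\frac12}f_0$ via Theorem \ref{mod}, and set $f_1=(Id_W+C)^{-\frac12}f_{01}$; your reading $f_1:F_1\to Gr_n(\mathbf R^{n+2})$, $f=f_1\circ\pi$ is the intended one. Your extra checks fill in points the paper leaves implicit: that $K_1$ preserves the lowest weight line, so that $f_0$ factors through $\pi$, and the uniqueness argument via the zero labels of $f^*Q$ together with surjectivity of $\pi$.
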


\begin{proof}
In a similar manner that in the proof of Theorem \ref{52}
we obtain 
 \[
f_0=f_{01}\circ \pi,
\] 
where $f_{0}:F \to Gr_n(\mathbf R^{n+2})$ and  
$f_{01}:F_1 \to Gr_n(\mathbf R^{n+2})$ are the standard maps. 
When $f$ is written as 
\[
f(x)=(Id_{W}+C)^{-\frac{1}{2}}f_{0}(x), \quad x \in F
\]
from Theorem \ref{mod}, we can define 
a holomorphic isometric embedding 
\[
f_1:F_1\to Gr_n(\mathbf R^{n+2})
\] 
as:
\begin{equation}\label{f1_equation}
f_1(p)=(Id_{W}+C)^{-\frac{1}{2}}f_{01}(p), \quad p \in F_1. 
\end{equation}
Then we get $f=\pi \circ f_1$. 
\end{proof}
Taking (\ref{f1_equation}) into account, and following the same steps as in the proof
of Theorem \ref{converse} we get
\begin{thm}
Let  
\[
f:F_G(\alpha_1,\alpha_2, \cdots \alpha_{r}) 
\to Gr_n(\mathbf R^{n+2})
\]
be a holomorphic isometric embedding. 

Then for any 
set of simple roots 
$\{\alpha_1,\alpha_2, \cdots \alpha_{s}\}$ satisfying 
\[
\{\alpha_1,\alpha_2, \cdots, \alpha_{r}\}\subset 
\{\alpha_1,\alpha_2, \cdots, \alpha_{s}\},
\] 
there exists a fibre bundle   
\[
\pi:F_G(\alpha_1,\alpha_2, \cdots, \alpha_{s}) 
\to F_G(\alpha_1,\alpha_2, \cdots, \alpha_{r})
\] 
such that 
\[
\tilde f=\pi\circ f:F_G(\alpha_1,\alpha_2, \cdots \alpha_{s})
\to Gr_n(\mathbf R^{n+2})
\] is an Einstein-Hermitian holomorphic map. 
\end{thm}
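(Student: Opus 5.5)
We read $\pi\circ f$ as the composition $f\circ\pi$, as in the proof of the previous theorem. The plan is to pull back the line bundle along $\pi$, identify the section spaces via Bott--Borel--Weil, and show that the standard maps and the moduli spaces of Theorem \ref{mod} match, so that $f\circ\pi$ is one of the maps classified there.

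\emph{Step 1 (pull back and identify sections).} Set $F_1=F_G(\alpha_1,\dots,\alpha_r)$ and $F=F_G(\alpha_1,\dots,\alpha_s)$. Then $K=C(T_F)\subset K_1=C(T_{F_1})$, and $\pi:F\to F_1$ is the homogeneous fibration obtained by uncrossing the nodes $\alpha_{r+1},\dots,\alpha_s$. Since $f$ is a holomorphic isometric embedding, $f^*Q\cong\mathcal O_{F_1}(k_1,\dots,k_r)$ with all $k_i>0$. Its pull-back is
\[
\pi^*\mathcal O_{F_1}(k_1,\dots,k_r)=\mathcal O_F(k_1,\dots,k_r,0,\dots,0).
\]
As in Theorem \ref{converse}, Bott--Borel--Weil gives
\[
W=H^0(F_1;\mathcal O(k_1,\dots,k_r))=H^0(F;\mathcal O(k_1,\dots,k_r,0,\dots,0)),
\]
and $ev_x\circ\pi^*=ev_{\pi(x)}$. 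Hence the standard maps satisfy $f_0=f_{01}\circ\pi$. The $G$-invariant $L^2$ inner products on the two copies of $W$ agree up to a positive constant, since both are invariant inner products on one irreducible module (Schur). By Theorem \ref{mod} this constant can be absorbed.

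\emph{Step 2 (match the moduli and write down $\tilde f$).} The decomposition $S^2W=W_1\oplus S^2W_0$ depends only on the $G$-module $W$ and its lowest weight. So the set $\overline{\mathcal M}=\{C\in S^2W_0 : Id_W+C\ge0\}$ is the same for $F$ and for $F_1$. By Theorem \ref{mod} applied on $F_1$, and the description \eqref{f1_equation}, $f=(Id_W+C)^{-1/2}f_{01}$ for some $C\in\overline{\mathcal M}$; if $C$ lies on the boundary we pass to the subspace $\mathbf R^{p+2}$ it singles out. Then
\[
\tilde f=f\circ\pi=(Id_W+C)^{-1/2}f_{01}\circ\pi=(Id_W+C)^{-1/2}f_0 .
\]

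\emph{Step 3 (verify that $\tilde f$ is Einstein--Hermitian).} I would argue directly rather than only cite Theorem \ref{mod} for $F$. With $T^2=Id_W+C$, condition \eqref{HDW6} for $F$ reads $(C,G\mathrm S(V_0',V_0'))_S=0$, where $V_0'$ is the fibre of the pulled-back bundle at the base point of $F$. This fibre is spanned by the same lowest weight vector $v$ as $V_0$ for $F_1$, because the lowest weight $-\sum_{i\le r}k_i\varpi_i$ is unchanged. Hence $G\mathrm S(V_0',V_0')=W_1$, and the condition holds because $C\in S^2W_0$. Theorem \ref{HGenDWI} then gives a full holomorphic map $\tilde f$ with $\tilde f^*Q\cong(\mathcal O_F(k_1,\dots,k_r,0,\dots,0),h)$ as Hermitian holomorphic line bundles. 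This line bundle is irreducible homogeneous with an invariant metric, so by Kobayashi's theorem it is Einstein--Hermitian for any $G$-invariant Kähler metric on $F$. Therefore $\tilde f$ is an Einstein--Hermitian holomorphic map.

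\emph{Main obstacle.} The delicate point is to check that $\tilde f^*Q$ really carries the invariant metric $h$, so that the gauge condition holds on $F$ and not merely on $F_1$. This follows from Step 3 via condition \eqref{HDW 4}, because $ev^F=\pi^*ev^{F_1}$. Pulling back a degenerate Kähler class is harmless, since the EH property is independent of the chosen invariant metric.
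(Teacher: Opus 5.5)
Your proposal is correct and follows essentially the paper's route: pull back $\mathcal O_{F_1}(k_1,\dots,k_r)$ along $\pi$, identify the section spaces by Bott--Borel--Weil (the inverse construction) so that $f_0=f_{01}\circ\pi$, and then $f\circ\pi=(Id_W+C)^{-1/2}f_0$ with the same $C$ as in \eqref{f1_equation}, which is Einstein--Hermitian by Theorem \ref{mod}. What you call the ``main obstacle'' is actually immediate: $(f\circ\pi)^*(Q,h_Q)=\pi^*(f^*Q,f^*h_Q)\cong\pi^*(L,h)$ is already a homogeneous line bundle with a $G$-invariant metric on $F$, so Kobayashi's theorem applies to it directly.
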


From Theorem \ref{equality_holds} we obtain the following analogue of Theorem \ref{preserved_minimality}:
\begin{thm}
  The tower of transforms preserves the minimality of the $L^2$ norm associated to the mean curvature operators.
\end{thm}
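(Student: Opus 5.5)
The plan is to reduce the statement to the quadric case of Theorem \ref{equality_holds}, exactly as Theorem \ref{preserved_minimality} was obtained in the Grassmannian case. The key point is that every map appearing in the tower is Einstein--Hermitian holomorphic with respect to a $G$-invariant K\"ahler form, and that the equality case of Theorem \ref{equality_holds} characterises precisely such maps. First, fix a map in the tower, say $f:F\to Gr_n(\mathbf R^{n+2})$, written by Theorem \ref{mod} as $f=(Id_W+C)^{-\frac12}f_0$ with $C\in\overline{\mathcal M}$. Its transforms are the maps $f_1=(Id_W+C)^{-\frac12}f_{01}$ on $F_1$, given by \eqref{f1_equation}, and the maps $\tilde f$ on the larger flag manifolds from the converse theorem. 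All of them are obtained from the same endomorphism $T=(Id_W+C)^{\frac12}$ acting on the standard maps.

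Second, I will check that each map in the tower is Einstein--Hermitian. For $f_1$ this is the previous theorem, and for $\tilde f$ it is the converse theorem. To see the mechanism, note that the pull-back of the universal quotient bundle is $\mathcal O_{F_1}(k_2,\dots,k_r)$ on $F_1$ and its pull-back $\pi^*\mathcal O_{F_1}(k_2,\dots,k_r)=\mathcal O_F(0,k_2,\dots,k_r)$ on $F$. Each is an irreducible homogeneous line bundle with its invariant metric, and by Kobayashi's theorem (\cite{KobHomVBs}) it is Einstein--Hermitian for every $G$-invariant K\"ahler form. The gauge condition is preserved because the condition $(C,G\mathrm S(V_0,V_0))_S=0$, i.e.\ $C\in S^2W_0$, depends only on $W$ and the lowest weight vector. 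These are the same for $F$ and $F_1$ by the Ward correspondence $H^0(F;\mathcal O_F(0,k_2,\dots,k_r))=H^0(F_1;\mathcal O_{F_1}(k_2,\dots,k_r))$. So the same $C$ defines a point of the moduli space at both levels, via Theorem \ref{HGenDWI}.

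Third, apply the quadric part of Theorem \ref{equality_holds} at each level. Since each map is Einstein--Hermitian holomorphic, equality holds in
\[8\left(\frac{\pi}{(m-1)!}\int_Mc_1(f^*Q)\wedge\omega_M^{m-1}\right)^2\leq {\rm vol}(M)\int_M|A|^2 dv_M,\]
with $M=F$ or $M=F_1$ respectively. The left-hand side is a topological quantity fixed by the homotopy class of the map, which here is determined by the degree. Hence each map attains the minimum of $\|A\|^2$ in its homotopy class. It follows that the transform $f\mapsto f_1$ and its inverse send $L^2$-minimisers of the mean curvature operator to $L^2$-minimisers.

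The main obstacle I expect is the choice of K\"ahler form on the total space $F$. On $F_1$ the form $\omega$ represents $c_1$ of an ample bundle, but $\mathcal O_F(0,k_2,\dots,k_r)$ is only semi-positive on $F$. So on $F$ I must choose a $G$-invariant K\"ahler form independently, representing $c_1$ of some positive line bundle, as stipulated in \S3.2. Theorem \ref{equality_holds} then still applies, because it requires only a compact K\"ahler manifold, and the Einstein--Hermitian property does not depend on which invariant form is chosen, again by Kobayashi's theorem. One should also note that the constant $c$ is nonzero as long as the degree is nonzero, though the equality statement holds regardless.
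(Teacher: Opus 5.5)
Your argument is correct and follows essentially the paper's proof: every map in the tower is Einstein--Hermitian holomorphic by the two preceding theorems, so it attains equality in the quadric case of Theorem \ref{equality_holds}, whose left-hand side is a homotopy invariant, and therefore each map minimises $\|A\|^2$ in its homotopy class. Your extra remark about fixing a $G$-invariant K\"ahler form on the upper flags, where the pulled-back line bundle is only semi-positive, is a detail the paper leaves implicit.
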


\end{document}